\documentclass[11pt]{article}

\usepackage[a4paper,margin=1in]{geometry}
\usepackage[T1]{fontenc}
\usepackage[utf8]{inputenc}
\usepackage{lmodern}
\usepackage{microtype}
\usepackage{amsmath,amssymb,amsthm,mathtools,mathrsfs}
\usepackage{enumitem}
\usepackage{hyperref}
\usepackage{xcolor}
\usepackage{bbm}
\usepackage{tikz}
\usetikzlibrary{arrows.meta,positioning,calc}
\usepackage{comment}
\hypersetup{
  colorlinks=true,
  linkcolor=blue!60!black,
  citecolor=blue!60!black,
  urlcolor=blue!60!black
}

\newtheorem{theorem}{Theorem}[section]
\newtheorem{proposition}[theorem]{Proposition}
\newtheorem{lemma}[theorem]{Lemma}
\newtheorem{corollary}[theorem]{Corollary}
\theoremstyle{remark}
\newtheorem{remark}[theorem]{Remark}

\theoremstyle{plain}
\newtheorem{assumption}[theorem]{Assumption}
\newcommand{\Pbb}{\mathbb{P}}
\newcommand{\Ebb}{\mathbb{E}}
\newcommand{\1}{\mathbbm{1}}
\newcommand{\dd}{\mathrm{d}}

\newcommand{\cL}{\mathcal{L}}

\title{Macroscopic loops in the random loop model on sparse random graphs}
\author{
Andreas Klippel
\thanks{Fachbereich Mathematik, Technische Universit\"at Darmstadt, Schlossgartenstrasse 7, 64289 Darmstadt, Germany}
\\
\texttt{andreas.klippel@tu-darmstadt.de}
}

\begin{document}
\maketitle

\begin{abstract}
We study the random loop model with crosses and bars on sparse random graphs. Our main objective is to prove the existence of macroscopic loops, in the sense that a loop visits a positive proportion of the vertices. We develop a deterministic drift method on arbitrary finite graphs based on three ingredients: a local split--merge--rewire analysis for the loop number, an exact differential identity for the partition function, and a slice estimate reducing the relevant same-loop insertion volume to induced edge counts of small vertex sets. This yields a general criterion in terms of a small-set sparsity condition on the underlying graph. We then verify this condition for random regular graphs, sparse Erd\H{o}s--R\'enyi graphs, and simple bounded-degree configuration models, obtaining averaged lower bounds on the probability of a macroscopic loop whenever the edge density exceeds an explicit threshold depending on the loop weight \(\theta\) and the cross parameter \(u\). For integer values of \(\theta\), a trace representation of the partition function implies log-convexity, which upgrades the averaged bounds to pointwise-in-time results away from the threshold time.
\end{abstract}

\section{Introduction}
Random cycle and loop models form a natural meeting point between probability theory and
quantum statistical mechanics. On the one hand, they arise as random permutation models such
as the interchange process; on the other hand, they appear as graphical representations of
quantum spin systems, going back to the work of T\'oth, Aizenman-Nachtergaele. In these
representations, questions about magnetic order and correlations are translated into geometric
questions about the cycle or loop structure. This point of view has proved very fruitful over
the last decades; see for instance \cite{Toth1993,AizenmanNachtergaele1994,
GoldschmidtUeltschiWindridge2011,Ueltschi2013}.
A central question in this area is whether the model exhibits only small loops or whether long,
in particular macroscopic, loops occur. The answer is known to depend strongly on the geometry
of the underlying graph. In low-dimensional quantum systems, Mermin--Wagner-type phenomena
rule out spontaneous breaking of continuous symmetries, and correspondingly one expects the
absence of macroscopic loop order in dimensions one and two \cite{MerminWagner1966}. This
picture is supported by rigorous decay results for loop correlations in two-dimensional quantum
systems with continuous symmetry \cite{BenassiFrohlichUeltschi2017}. By contrast, in higher
dimensions one expects long loops and long-range order, and this is indeed known in several
cases through reflection positivity and infrared-bound methods in the quantum setting
\cite{Ueltschi2013,bjornberg2022reflection,betz2026improved}.
Beyond Euclidean lattices, long-cycle and long-loop phenomena have been established on a number
of specific geometries. On the complete graph, Schramm proved the emergence of macroscopic
cycles for the interchange process together with Poisson--Dirichlet asymptotics
\cite{Schramm2005}. For interchange-type models with reversals, Bj\"ornberg, Kotowski, Lees,
and Mi{\l}o{\'s} showed on the complete graph that the presence of reversals leads to a
split-versus-twist mechanism and to Poisson--Dirichlet asymptotics with parameter \(1/2\),
thereby extending Schramm's picture beyond the pure interchange case
\cite{BjornbergKotowskiLeesMilos2019}. On trees, Angel introduced the cyclic-time random walk
and its connection to infinite permutations \cite{Angel2003}, and later work of Hammond and of
Betz, Ehlert, Lees, and Roth established sharp phase-transition results for random stirring and
random loop models on trees \cite{Hammond2015,BetzEhlertLeesRoth2021}. For finite
high-dimensional graphs, phase transitions and macroscopic cycles were proved on the Hamming
graph and on the hypercube
\cite{MilosSengul2019,AdamczakKotowskiMilos2021,KoteckyMilosUeltschi2016}. Very recently,
Elboim and Sly proved the existence of infinite cycles for the interchange process on
\(\mathbb Z^d\) for all \(d\ge 5\) at sufficiently large parameter, confirming a conjecture of
T\'oth in these dimensions \cite{ElboimSly2024}. Finally, and most relevant for the present
work, Poudevigne--Auboiron established macroscopic cycles for the interchange and quantum
Heisenberg models on random regular graphs by combining a deterministic drift argument with a
small-set sparsity estimate \cite{PoudevigneAuboiron2022}.
The purpose of this paper is to show that the deterministic drift argument offers significantly more flexibility than in the setting of \cite{PoudevigneAuboiron2022}. On the one hand, we extend the results from the random interchange to the general loop model with crosses and bars. From the model point of view, this is a generalization of
the interchange / Heisenberg setting. In the pure interchange case, a local insertion either
merges two cycles or splits one cycle. In more general loop models with reversals, however, a
local insertion on the same loop may also leave the number of loops unchanged while
restructuring the loop internally; (see, e.g.,~\cite{BjornbergKotowskiLeesMilos2019}).
We use a specific deterministic implementation of this split-versus-twist mechanism to derive the differential identities and drift inequalities that underpin the deterministic drift argument.
On the other hand, we show that the method is applicable to a much wider class of random graphs. The main
conceptual point of the paper is that the deterministic part of the argument can be formulated
on an arbitrary finite graph, and that the random-graph input enters only through a small-set
sparsity condition. Roughly speaking, one needs that sufficiently small vertex sets span at most
linearly many internal edges. Once this condition is available, together with a lower bound on
the asymptotic edge density, the deterministic drift estimate yields a positive lower bound on
the probability of a macroscopic loop. In this sense, the method applies to a whole class of
sparse random graphs rather than to a single ensemble.
We then verify the assumptions of this abstract criterion for three standard examples: random
regular graphs, sparse Erd\H{o}s--R\'enyi graphs, and simple bounded-degree configuration
models. This gives averaged lower bounds for the probability of a macroscopic loop in all three
cases. Thus our results generalize the random-regular-graph approach of
\cite{PoudevigneAuboiron2022} in two directions at once: first, from interchange-type models to
the full random loop model with crosses and bars; second, from one specific sparse random graph
ensemble to a general sparsity-based framework that covers several canonical classes of sparse
random graphs.
For integer values of the loop weight, one can go further. In that case the partition function
admits the standard finite-dimensional trace representation from the quantum-spin formalism, and
hence is log-convex in the time parameter \cite{Ueltschi2013}. This allows us to strengthen the
averaged bounds to pointwise-in-time statements away from the natural threshold time determined
by the drift inequality.
This shows, first, that the drift philosophy behind macroscopic-cycle results on sparse random
graphs is robust enough to survive the substantially richer local geometry of the
crosses-and-bars model. Second, it isolates a graph-theoretic sparsity principle that is
flexible and applies far beyond random regular graphs. Finally, it provides a natural framework
in which probabilistic ideas from interchange processes, random loop models, and quantum spin
systems can be treated simultaneously on sparse random structures.

\paragraph{Outline of the remaining sections.}
The paper is organized as follows. In Section~2 we introduce the random loop model and the random graph ensembles considered in this paper, and we state the main results. Section~3 develops the deterministic split--merge--rewire analysis and derives the associated differential identity and slice estimate. Section~4 verifies the required small-set sparsity condition for the random graph ensembles considered here. Section~5 proves the averaged and pointwise criteria and derives the corresponding results for the concrete examples. Finally, the appendix establishes log-convexity of the partition function for integer loop weights.

\section{Model and main results}
\subsection{The random loop model}

Let \(G=(V,E)\) be a finite simple graph, and write \(n:=|V|\). We consider the cylinder
\[
G\times S^1,
\qquad S^1:=\mathbb R/\mathbb Z .
\]
For each edge \(e\in E\), let \(\omega_e\) be an independent marked Poisson point process on
\(S^1\times\{\times,\mid\}\) with intensity measure
\[
t\,\dd s\otimes\bigl(u\,\delta_{\times}+(1-u)\,\delta_{\mid}\bigr),
\qquad t\ge 0,\quad u\in[0,1].
\]
We write \(\omega:=(\omega_e)_{e\in E}\) for the resulting marked configuration and denote its law by
\(\rho^G_{t,u}\). An atom \((s,\times)\in\omega_e\) is called a \emph{cross}, and an atom
\((s,\mid)\in\omega_e\) a \emph{bar}.
Given \(\omega\), the loops are defined as follows. Consider a particle moving in
\(G\times S^1\times\{-1,+1\}\), where the third coordinate records the vertical direction.
Between marks, the particle moves vertically with unit speed in the current direction.
Suppose that at time \(s\in S^1\) the particle is at a vertex \(x\in V\), and that there is a mark
on an edge \(e=\{x,y\}\). Then the particle instantaneously jumps from \(x\) to \(y\). If the mark
is a cross, the direction is preserved; if it is a bar, the direction is reversed. Since
\(S^1=\mathbb R/\mathbb Z\), this evolution is periodic in time, and every trajectory is closed.
The resulting collection of closed trajectories in \(G\times S^1\) is denoted by \(\cL(\omega)\).
We write \(\lambda(\omega):=|\cL(\omega)|\) for the number of loops. For a loop
\(\gamma\in\cL(\omega)\), we define its vertex support by
\[
\operatorname{supp}_V(\gamma)
:=
\{x\in V:\exists s\in S^1\text{ such that }(x,s)\in\gamma\},
\]
and, for \(s\in S^1\), its slice at height \(s\) by $S_\gamma(s):=\{x\in V:(x,s)\in\gamma\}$.
For \(\theta>0\), we define the \(\theta\)-weighted random loop measure by
\[
\Pbb^G_{\theta,t,u}(\dd\omega)
:=
\frac{1}{Z_G(\theta,t,u)}\,\theta^{\lambda(\omega)}\,\rho^G_{t,u}(\dd\omega),
\]
where the partition function is $Z_G(\theta,t,u):=\int \theta^{\lambda(\omega)}\,\rho^G_{t,u}(\dd\omega)$.
Expectations with respect to \(\Pbb^G_{\theta,t,u}\) are denoted by
\(\Ebb^G_{\theta,t,u}\).
For \(\eta>0\), the main event of interest is
\[
A_\eta
:=
\Bigl\{\exists \gamma\in\cL(\omega): |\operatorname{supp}_V(\gamma)|>\eta n\Bigr\}.
\]
Our graph-theoretic input is the following small-set sparsity condition: there exist
\(\eta,\varepsilon>0\) such that
\begin{equation}
e_G(S)\le (1+\varepsilon)|S|
\qquad
\text{for every }S\subset V\text{ with }|S|\le \eta |V|.
\tag{A1}\label{eq:A1}
\end{equation}
Here \(e_G(S)\) denotes the number of edges induced by \(S\).
\subsection{Random graph ensembles}

In the main results, the underlying graph is itself random. More precisely, for each \(n\),
we first sample a random finite graph \(G_n\) on vertex set \([n]:=\{1,\dots,n\}\), and then,
conditional on \(G_n\), we consider the random loop model on \(G_n\times S^1\) introduced
above. Thus \(\Pbb^{G_n}_{\theta,t,u}\) denotes the loop measure conditional on the graph,
whereas \(\Pbb\) denotes the law of the random graph itself.
We will consider the following three standard sparse random graph ensembles.

\begin{enumerate}[label=\textnormal{(\roman*)}]
    \item \textbf{Random regular graphs.}
    Fix \(d\ge 3\). A random \(d\)-regular graph on \([n]\) is a graph chosen uniformly from
    the set of all simple graphs on \([n]\) in which every vertex has degree exactly \(d\).
    We denote its law by \(p_{d,n}\). As usual, this model is defined only when \(nd\) is even.

    \item \textbf{Sparse Erd\H{o}s--R\'enyi graphs.}
    Fix \(\lambda>0\). In the Erd\H{o}s--R\'enyi model \(G(n,\lambda/n)\), each edge
    \(\{i,j\}\), \(1\le i<j\le n\), is present independently with probability \(\lambda/n\).
    Thus the expected degree of a vertex is asymptotically \(\lambda\), so this is a sparse
    random graph model.

    \item \textbf{Simple bounded-degree configuration models.}
    Let \((d_i^{(n)})_{i=1}^n\) be a deterministic degree sequence with even total degree.
    In the configuration model, one attaches \(d_i^{(n)}\) half-edges to vertex \(i\), and then
    chooses a uniform random pairing of all half-edges. This produces a random multigraph
    \(\widetilde G_n\), which may contain self-loops and multiple edges. The corresponding
    simple configuration model is obtained by conditioning on the event that
    \(\widetilde G_n\) is simple; we denote the resulting simple graph by \(G_n\).
\end{enumerate}

\subsection{Main results}

We now state our results for a general sequence of sparse random graphs.
The concrete graph ensembles introduced above will then appear as corollaries.
For \(\eta,\varepsilon>0\), define the event
\[
\mathcal E_n(\eta,\varepsilon)
:=
\Bigl\{
\forall S\subset V(G_n)\text{ with }|S|\le \eta n:\ 
e_{G_n}(S)\le (1+\varepsilon)|S|
\Bigr\}.
\]
Thus \(\mathcal E_n(\eta,\varepsilon)\) is the event that \(G_n\) satisfies the
small-set sparsity condition \eqref{eq:A1} up to scale \(\eta n\).
For \(u\in[0,1]\) and \(\theta>0\), write $c_{\theta,u}:=1+\theta\max\{u,1-u\}$.

\begin{assumption}[Sparse random graph assumptions]
\label{ass:sparse-random-graph}
Let \(G_n\) be a random simple graph on \([n]\). We assume that:
\begin{enumerate}[label=\textnormal{(\alph*)},ref=\textnormal{(\alph*)}]
    \item\label{ass:sparse-random-graph-a}
    for every \(\varepsilon>0\) there exists \(\eta>0\) such that
    \[
    \Pbb\bigl(\mathcal E_n(\eta,\varepsilon)\bigr)\xrightarrow[n\to\infty]{}1;
    \]

    \item\label{ass:sparse-random-graph-b}
    there exists \(\alpha>0\) such that
    \[
    \frac{|E(G_n)|}{n}\xrightarrow[n\to\infty]{\Pbb}\alpha.
    \]
\end{enumerate}
\end{assumption}

\begin{theorem}[Averaged macroscopic-loop criterion]
\label{thm:abstract-averaged}
Let \(G_n\) be a random simple graph on \([n]\). Fix \(u\in[0,1]\) and \(\theta>0\).
Assume that \((G_n)_n\) satisfies Assumption~\ref{ass:sparse-random-graph}, with limiting
edge density \(\alpha\), and that
\[
\alpha>c_{\theta,u}.
\]
Then there exist \(\eta>0\), \(s>0\), and \(c>0\) such that for every \(a\ge 0\),
\[
\Pbb\!\left(
\frac1s\int_a^{a+s}\Pbb^{G_n}_{\theta,t,u}(A_\eta)\,\dd t \ge c
\right)\xrightarrow[n\to\infty]{}1.
\]
\end{theorem}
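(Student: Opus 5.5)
The plan is a drift argument on the derivative of \(\log Z_G(\theta,t,u)\) in \(t\). We first derive an exact differential identity by Poisson calculus (Mecke formula). Adding a mark on edge \(e\) at height \(s\) changes the loop number by \(\Delta\in\{-1,0,+1\}\). This gives
\[
\frac{\dd}{\dd t}\log Z_G(\theta,t,u)=\sum_{e\in E}\int_{S^1}\Ebb^G_{\theta,t,u}\Bigl[u\,\theta^{\Delta^\times_{e,s}}+(1-u)\,\theta^{\Delta^{\mid}_{e,s}}-1\Bigr]\dd s .
\]
The next step is the local split--merge--rewire analysis. If the endpoints of \(e\) at height \(s\) lie on different loops, either mark merges them, so \(\Delta=-1\). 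If they lie on the same loop, then, depending on the relative orientation of the two visits, one of the two mark types splits the loop (\(\Delta=+1\)) and the other rewires it (\(\Delta=0\)). The integrand is therefore \(\theta^{-1}-1\) on different-loop pairs and at most \(\theta\max\{u,1-u\}+\min\{u,1-u\}-1\) on same-loop pairs. Writing \(I(t)\) for the expected same-loop insertion volume \(\sum_e\int_{S^1}\Pbb(x,y\text{ on same loop at }s)\,\dd s\), we obtain
\[
\frac{\dd}{\dd t}\log Z_G\le |E|(\theta^{-1}-1)+C_{\theta,u}\,I(t),
\]
where the constant is something like \(C_{\theta,u}=c_{\theta,u}-\theta^{-1}\) after rearranging. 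The precise algebra must be arranged so that the threshold \(c_{\theta,u}\) emerges. I would first try to show that any same-loop contribution is bounded in terms of \(|E|\) times \(\theta\max\{u,1-u\}\).

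The second step is the slice estimate. On the complement of \(A_\eta\), every loop \(\gamma\) has \(|S_\gamma(s)|\le|\operatorname{supp}_V(\gamma)|\le\eta n\). The same-loop pairs at height \(s\) are then the edges induced inside the slices \(S_\gamma(s)\). Under \eqref{eq:A1} this number is at most \(\sum_\gamma(1+\varepsilon)|S_\gamma(s)|=(1+\varepsilon)n\), since the slices partition \(V\). Hence \(I(t)\le(1+\varepsilon)n+|E|\,\Pbb(A_\eta)\).

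Third, we integrate over \([a,a+s]\) and compare with trivial bounds. Since \(\lambda\ge1\) and \(\lambda\le n+(\text{number of marks})\), we have, for \(\theta\ge1\), \(\log Z\ge \log\theta\). More usefully, \(\log Z_G\) is bounded below by comparison at each \(t\): deleting all marks costs at most a probability \(e^{-t|E|}\). The cleaner route is an \emph{a priori} lower bound of the form \(\frac{\dd}{\dd t}\log Z\ge -\kappa |E|\) together with \(|\log Z(a)|\lesssim n(1+a)\). I would instead follow Poudevigne-Auboiron and choose the bounds so that, on \(\mathcal E_n(\eta,\varepsilon)\) with \(|E|\approx\alpha n\), the integrated drift is
\[
\log Z(a+s)-\log Z(a)\le s\bigl[-\delta n + C|E|\,\tfrac1s\!\int_a^{a+s}\Pbb(A_\eta)\,\dd t\bigr].
\]
Here \(\delta>0\) exactly because \(\alpha>c_{\theta,u}\). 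Combined with the general lower bound \(\log Z(a+s)-\log Z(a)\ge -\text{(small)}\,ns\), this forces the time-average of \(\Pbb(A_\eta)\) to be at least \(c>0\). The lower bound comes from the monotone coupling, since the derivative is at least \(|E|\min(\theta^{-1}-1,\dots)\), which must be checked in both regimes \(\theta<1\) and \(\theta\ge1\). Finally, Assumption \ref{ass:sparse-random-graph} supplies \(\mathcal E_n(\eta,\varepsilon)\) and \(|E|/n\to\alpha\) with probability tending to one, with \(\varepsilon\) chosen small enough that \(\alpha>c_{\theta,u}(1+\varepsilon)\).

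The main obstacle is the sign bookkeeping in the first step. We must identify the correct pairing between the drift constant and \(c_{\theta,u}\), and produce a matching lower bound on the derivative that is uniform in \(a\). The averaged statement for arbitrary \(a\) suggests the comparison is between the upper drift and a lower drift, both linear in \(n\), rather than between absolute values of \(\log Z\).
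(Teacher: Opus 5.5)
Your first two steps are the paper's: Proposition~\ref{prop:exact-drift}, Lemma~\ref{lem:local-surgery} and Lemma~\ref{lem:slice}. After multiplying by $\frac{\theta}{\theta-1}$ they give
\begin{align*}
D_{\theta,u}(t)&=\Ebb^G_{\theta,t,u}\bigl[(1+\theta u)J_{+}+(1+\theta(1-u))J_{-}-|E|\bigr]\\
&\le \bigl(c_{\theta,u}(1+\varepsilon)n-|E|\bigr)+\bigl((1+\theta)|E|-c_{\theta,u}(1+\varepsilon)n\bigr)\Pbb^G_{\theta,t,u}(A_\eta).
\end{align*}
The gap is in your third step. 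Your monotone-coupling lower bound, $\partial_t\log Z_G\ge(\theta^{-1}-1)|E|$, amounts to $D_{\theta,u}\ge-|E|$. This is not ``small'': it exactly cancels the $-|E|$ in the upper bound. Both bounds are satisfied with $\Pbb^G_{\theta,t,u}(A_\eta)\equiv 0$, so the comparison forces nothing. No pointwise-in-$t$ lower bound can do better: at $t=0$ every loop is a circle $\{x\}\times S^1$, so $J_\pm=0$ and $D_{\theta,u}(0)=-|E|$. What is needed is an \emph{integrated} bound whose size does not grow with the window, namely $\int_a^{a+s}D_{\theta,u}(t)\,\dd t\ge -C_\theta n$ with $C_\theta$ independent of $a$ and $s$. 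Averaging then costs only $C_\theta n/s$. Choosing $s$ large, which is exactly why the theorem is stated for a window average with free $s$, leaves the positive numerator $\alpha-\xi-c_{\theta,u}(1+\varepsilon)-C_\theta/s$ of Proposition~\ref{prop:abstract-averaged}. The paper gets this from the comparison of values of $\log Z_G$ that you set aside. From $1\le\lambda\le n$ it deduces $|\log Z_G(\theta,a+s,u)-\log Z_G(\theta,a,u)|\le n|\log\theta|$, and $|\Ebb\lambda(a+s)-\Ebb\lambda(a)|\le n$ for $\theta=1$.

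Your remark $\lambda\le n+\#\{\text{marks}\}$ is well taken, though. The bound $\lambda\le n$ holds for $u=1$, where every loop moves monotonically and meets every slice, but not once bars are present. Indeed, $k\ge1$ bars on a single edge produce $k$ loops, and for $G=K_2$, $u=0$, one computes $Z_G=e^{-t}(\theta^2-1+e^{\theta t})$, which is unbounded for $\theta>1$. So for $u<1$ the $a$-uniform oscillation bound needs a separate justification. For integer $\theta\ge2$, the log-convexity of the appendix gives $\int_a^{a+s}\partial_t\log Z_G\ge\int_0^s\partial_t\log Z_G\ge-(n-1)\log\theta$, using only $Z_G\ge\theta$ and $Z_G(\theta,0,u)=\theta^n$. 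Either way, your proposal does not supply this step, and without it the drift inequality yields nothing. Two smaller points. First, the coefficient of the same-loop volume in $\partial_t\log Z_G$ is $\frac{\theta-1}{\theta}c_{\theta,u}$, not $c_{\theta,u}-\theta^{-1}$. Second, your formulation via $\log Z_G$ breaks for $\theta\le1$: for $\theta<1$ your ``at most'' bound and the subsequent inequalities flip, and for $\theta=1$ one has $Z_G\equiv1$, so you must use $\Ebb^G_{1,t,u}[\lambda]$. Working with $D_{\theta,u}$ as above treats all cases uniformly.
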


\begin{remark}
The threshold condition \(\alpha>c_{\theta,u}\) in
Theorem~\ref{thm:abstract-averaged} should be understood as a proof threshold
arising from the drift method, not necessarily as the true critical threshold
of the model.
\end{remark}

\begin{corollary}[Existence of a good time]
\label{cor:abstract-good-time}
Under the assumptions of Theorem~\ref{thm:abstract-averaged}, there exist
\(\eta>0\), \(s>0\), and \(c>0\) such that for every \(a\ge 0\),
\[
\Pbb\!\left(
\exists\, t\in[a,a+s]:
\quad
\Pbb^{G_n}_{\theta,t,u}(A_\eta)\ge c
\right)\xrightarrow[n\to\infty]{}1.
\]
\end{corollary}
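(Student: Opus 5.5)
The corollary follows directly from Theorem~\ref{thm:abstract-averaged}, by the elementary fact that an average over an interval cannot exceed the supremum over that interval. Take the constants \(\eta,s,c\) provided by Theorem~\ref{thm:abstract-averaged} and use the same \(\eta\) and \(s\), with constant \(c/2\) (or \(c\) itself, see below). Fix \(a\ge 0\) and \(n\). On the event
\[
\mathcal G_n:=\Bigl\{\tfrac1s\int_a^{a+s}\Pbb^{G_n}_{\theta,t,u}(A_\eta)\,\dd t\ge c\Bigr\},
\]
suppose for contradiction that \(\Pbb^{G_n}_{\theta,t,u}(A_\eta)<c/2\) for every \(t\in[a,a+s]\). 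Then the average over \([a,a+s]\) is at most \(c/2<c\), which contradicts the defining inequality of \(\mathcal G_n\).

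Hence on \(\mathcal G_n\) there exists \(t\in[a,a+s]\) with \(\Pbb^{G_n}_{\theta,t,u}(A_\eta)\ge c/2\). This gives the inclusion of events
\[
\mathcal G_n\subseteq\Bigl\{\exists\,t\in[a,a+s]:\ \Pbb^{G_n}_{\theta,t,u}(A_\eta)\ge c/2\Bigr\},
\]
so monotonicity of \(\Pbb\) together with \(\Pbb(\mathcal G_n)\to1\) from Theorem~\ref{thm:abstract-averaged} gives the claim, with \(c\) replaced by \(c/2\).

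If one wants to keep the constant \(c\) itself, the argument changes slightly. The integrand is a nonnegative function of \(t\) with values in \([0,1]\), and if its average is \(\ge c\) then it cannot be \(<c\) everywhere. Indeed, if \(f(t)<c\) for all \(t\), then \(\int(c-f)>0\) unless \(c-f=0\) almost everywhere, which is impossible when \(c-f>0\) pointwise. So the strict inequality yields a strictly smaller average, and some \(t\) has \(\Pbb^{G_n}_{\theta,t,u}(A_\eta)\ge c\). This step requires the integral to be meaningful, that is, \(t\mapsto\Pbb^{G_n}_{\theta,t,u}(A_\eta)\) must be measurable. This holds because, for a fixed finite graph, both \(Z_G\) and \(\int\1_{A_\eta}\theta^{\lambda}\,\dd\rho^G_{t,u}\) are continuous in \(t\), as can be seen from the Poisson expansion in the number of marks. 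I expect no real obstacle here beyond recording this measurability (which is already implicit in the statement of Theorem~\ref{thm:abstract-averaged}). The event in the corollary need not itself be measurable, but the inclusion above gives the result using inner probability, or one can restrict \(t\) to rationals by continuity.
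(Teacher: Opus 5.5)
Your proposal is correct and is essentially the paper's argument: on the event from Theorem~\ref{thm:abstract-averaged}, an average $\ge c$ over $[a,a+s]$ forces some $t$ with $\Pbb^{G_n}_{\theta,t,u}(A_\eta)\ge c$, and monotonicity of $\Pbb$ finishes the proof. As your own later paragraph shows, the $c/2$ detour is unnecessary, and the measurability caveat about the outer event is harmless but moot, since $G_n$ ranges over only finitely many graphs on $[n]$.
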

For integer loop weights, the partition function admits a trace representation.
This yields a stronger pointwise-in-time statement.
\begin{theorem}[Pointwise macroscopic-loop criterion]
\label{thm:abstract-pointwise}
Let \(G_n\) be a random simple graph on \([n]\). Fix \(u\in[0,1]\) and
\(\theta\in\mathbb N\) with \(\theta>1\). Assume that \((G_n)_n\) satisfies
Assumption~\ref{ass:sparse-random-graph}, with limiting edge density \(\alpha\), and that $\alpha>c_{\theta,u}$.
Then for every \(\delta>0\) there exist \(\eta>0\) and \(c>0\) such that for every
\[
t\ge \frac{\theta\log\theta}{(\theta-1)(\alpha-c_{\theta,u})}+\delta,
\]
one has
\[
\Pbb\!\left(
\Pbb^{G_n}_{\theta,t,u}(A_\eta)\ge c
\right)\xrightarrow[n\to\infty]{}1.
\]
\end{theorem}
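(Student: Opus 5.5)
We derive the pointwise statement from the same differential identity that underlies Theorem~\ref{thm:abstract-averaged}, adding log-convexity of \(Z_{G_n}\) in \(t\), which holds for integer \(\theta\) by the trace representation.

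Write \(f_n(t):=\frac1n\log Z_{G_n}(\theta,t,u)\). Differentiating the Poisson intensity \(t\) gives the exact identity
\[
\frac{\dd}{\dd t}\log Z_{G}(\theta,t,u)=-|E|+\frac1t\,\Ebb^G_{\theta,t,u}\bigl[\#\text{marks}\bigr]
=\int_{S^1}\sum_{e\in E}\Bigl(u\,\Ebb^G_{\theta,t,u}\bigl[\theta^{\Delta^\times_{e,s}}\bigr]+(1-u)\,\Ebb^G_{\theta,t,u}\bigl[\theta^{\Delta^{\mid}_{e,s}}\bigr]-1\Bigr)\dd s .
\]
Here \(\Delta_{e,s}\in\{-1,0,+1\}\) is the change in the loop number when a cross or bar is inserted at \((e,s)\).

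The split--merge--rewire analysis handles the two kinds of insertion separately.
\begin{itemize}
\item If the endpoints of \(e\) at height \(s\) lie on different loops, the insertion merges them. Then \(\Delta=-1\), contributing \(\theta^{-1}-1\).
\item If they lie on the same loop, exactly one of the two mark types splits the loop and the other rewires it (\(\Delta=0\)). This caps the same-loop contribution at \(\max\{u,1-u\}(\theta-1)\) or thereabouts.
\end{itemize}
Hence
\[
f_n'(t)\le \frac{\theta^{-1}-1}{n}|E|+\frac{\text{const}}{n}\,\Ebb\bigl[\text{same-loop volume}\bigr].
\]

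The slice estimate bounds the same-loop volume on the complement of \(A_\eta\) by \(\sum_\gamma\int e_G(S_\gamma(s))\,\dd s\le(1+\varepsilon)n\), using \eqref{eq:A1}. Combining the two,
\[
f_n'(t)\le -\tfrac{\theta-1}{\theta}\bigl(\alpha-c_{\theta,u}-O(\varepsilon)\bigr)+C\,\Pbb^{G_n}_{\theta,t,u}(A_\eta)
\]
on \(\mathcal E_n\) with \(|E|/n\approx\alpha\). This is the inequality already used in the averaged proof, and I take it from Section~5 in that form.

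Next we need global information on \(f_n\). At \(t=0\) there are no marks, so \(\lambda=n\) and \(f_n(0)=\log\theta\). For every \(t\), \(\lambda\ge1\) and \(\rho\) is a probability measure, so \(f_n(t)\ge \frac{1}{n}\log\theta\ge0\).

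Suppose, for contradiction, that at some \(t_0\ge t_*+\delta\) we have \(\Pbb^{G_n}_{\theta,t_0,u}(A_\eta)<c\), with \(t_*:=\theta\log\theta/((\theta-1)(\alpha-c_{\theta,u}))\). Choosing \(\varepsilon\) and \(c\) small gives
\[
f_n'(t_0)\le -\kappa:=-\tfrac{\theta-1}{\theta}(\alpha-c_{\theta,u})+\delta',
\]
with \(\delta'\) small relative to \(\delta\). By log-convexity (appendix), \(f_n'\) is nondecreasing, so \(f_n'(t)\le -\kappa\) for all \(t\in[0,t_0]\). Integrating,
\[
f_n(t_0)\le \log\theta-\kappa t_0 .
\]
Since \(t_0\ge t_*+\delta\), the right-hand side is \(\le -c(\delta)<0\) once \(\delta'\) is small relative to \(\delta\). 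This contradicts \(f_n(t_0)\ge0\). The graph events from Assumption~\ref{ass:sparse-random-graph} hold with probability tending to one, so the conclusion holds with \(\Pbb\)-probability tending to one, uniformly in \(t\).

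The main obstacle is matching constants: the integrated drift must produce exactly \(\frac{\theta-1}{\theta}(\alpha-c_{\theta,u})\) so that the threshold time is \(t_*\). This needs care in the normalization of the same-loop term, namely \(c_{\theta,u}\) multiplied by \(\frac{\theta-1}{\theta}\) after rewriting \(1-\theta^{-1}\). If the constants from Section~3 differ, I would redo the algebra to express the drift bound as \(f_n'\le-\frac{\theta-1}{\theta}(\alpha-c_{\theta,u})+C\,\Pbb(A_\eta)+O(\varepsilon)\) before applying monotonicity of \(f_n'\).
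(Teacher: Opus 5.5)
Your proposal is correct and follows essentially the paper's route: the deterministic drift bound of Proposition~\ref{prop:deterministic-drift}, combined with convexity of $t\mapsto\log Z_{G_n}(\theta,t,u)$, the value $Z_{G_n}(\theta,0,u)=\theta^n$, and the bound $Z_{G_n}\ge\theta$. Your contradiction via monotonicity of $f_n'$ is the contrapositive of the paper's secant bound $\partial_t\log Z_{G}(\theta,t,u)\ge -n\log\theta/t$, which the paper instead substitutes directly into the drift bound to get an explicit lower bound on $\Pbb^{G}_{\theta,t,u}(A_\eta)$ (Proposition~\ref{prop:abstract-pointwise}).
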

The three graph ensembles introduced above satisfy the assumptions of
Theorems~\ref{thm:abstract-averaged} and \ref{thm:abstract-pointwise}. We
therefore obtain the following concrete consequences.

\begin{corollary}[Examples: averaged macroscopic-loop bounds]
\label{cor:examples-averaged}
Fix \(u\in[0,1]\) and \(\theta>0\).

\begin{enumerate}[label=\textnormal{(\roman*)}]
    \item \textbf{Random regular graphs.}
    Let \(G_n\) be the random \(d\)-regular graph on \([n]\), with law \(p_{d,n}\).
    If $d>2c_{\theta,u}$, then there exist \(\eta>0\), \(s>0\), and \(c>0\) such that for every \(a\ge 0\),
    \[
    p_{d,n}\!\left(
    \frac1s\int_a^{a+s}\Pbb^{G_n}_{\theta,t,u}(A_\eta)\,\dd t \ge c
    \right)\xrightarrow[n\to\infty]{}1.
    \]

    \item \textbf{Sparse Erd\H{o}s--R\'enyi graphs.}
    Let \(G_n\sim G(n,\lambda/n)\). If $\lambda>2c_{\theta,u}$,
    then there exist \(\eta>0\), \(s>0\), and \(c>0\) such that for every \(a\ge 0\),
    \[
    \Pbb\!\left(
    \frac1s\int_a^{a+s}\Pbb^{G_n}_{\theta,t,u}(A_\eta)\,\dd t \ge c
    \right)\xrightarrow[n\to\infty]{}1.
    \]

    \item \textbf{Simple bounded-degree configuration model.}
    Let \(G_n\) be the simple graph obtained from the configuration model with
    degree sequence \((d_i^{(n)})_{i=1}^n\). Assume that
    \[
    1\le d_i^{(n)}\le \Delta
    \qquad\text{for all }i,n,
    \]
    and $\frac1n\sum_{i=1}^n d_i^{(n)}\to \rho$. If $\rho>2c_{\theta,u}$,
    then there exist \(\eta>0\), \(s>0\), and \(c>0\) such that for every \(a\ge 0\),
    \[
    \Pbb\!\left(
    \frac1s\int_a^{a+s}\Pbb^{G_n}_{\theta,t,u}(A_\eta)\,\dd t \ge c
    \right)\xrightarrow[n\to\infty]{}1.
    \]
\end{enumerate}
\end{corollary}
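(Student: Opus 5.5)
The corollary follows from Theorem~\ref{thm:abstract-averaged} once each ensemble is shown to satisfy Assumption~\ref{ass:sparse-random-graph} with limiting edge density \(\alpha\). Part (b) gives the threshold in each case. For random \(d\)-regular graphs, \(|E(G_n)|/n=d/2\) deterministically, so \(\alpha=d/2\). For \(G(n,\lambda/n)\), \(|E(G_n)|\) is binomial with mean \(\binom n2\lambda/n\sim \lambda n/2\) and variance \(O(n)\), so Chebyshev gives \(|E(G_n)|/n\to\lambda/2\) in probability. For the simple configuration model, \(|E(G_n)|=\frac12\sum_i d_i^{(n)}\) deterministically, so \(\alpha=\rho/2\). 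In all three cases the hypotheses \(d,\lambda,\rho>2c_{\theta,u}\) translate exactly into \(\alpha>c_{\theta,u}\).

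The main work is part (a): for each \(\varepsilon>0\) we need \(\eta>0\) with \(\Pbb(\mathcal E_n(\eta,\varepsilon)^c)\to0\). I would use a first-moment union bound over vertex sets \(S\) with \(|S|=k\le \eta n\) and over sets of \(m=\lceil(1+\varepsilon)k\rceil\) edges inside \(S\) (if \(e(S)>(1+\varepsilon)k\), some \(m\)-subset of edges lies in \(S\)).

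\textbf{Erd\H{o}s--R\'enyi.} The bound is
\[
\sum_{k\le \eta n}\binom nk\binom{\binom k2}{m}\Bigl(\frac\lambda n\Bigr)^m
\le \sum_{k\le\eta n}\Bigl(\frac{en}{k}\Bigr)^k\Bigl(\frac{ek^2\lambda}{2mn}\Bigr)^m
\le \sum_{k\le \eta n} \Bigl[C_{\lambda,\varepsilon}\,(k/n)^{\varepsilon}\Bigr]^k .
\]
Choosing \(\eta\) so that \(C\eta^{\varepsilon}<1/2\), the terms with \(k\ge \log n\) contribute \(o(1)\). For small \(k\), \((k/n)^{\varepsilon k}\) is still at most \(n^{-\varepsilon}\) times a constant power, and summing over \(k\ge1\) gives \(o(1)\). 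One checks that \(k=1,2\) cannot violate the condition at all, since \(e(S)\le\binom k2<(1+\varepsilon)k\) there.

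\textbf{Configuration model.} I would run the same count for the multigraph \(\widetilde G_n\). The probability that \(m\) specified pairs of half-edges are all matched is at most \((\Delta n'/ \ldots)\), more precisely \(\prod_{i<m}(N-2i-1)^{-1}\le (2/N)^m\) for \(m\le N/4\), where \(N=\sum d_i\ge n\). The number of choices of \(m\) half-edge pairs inside \(S\) is at most \(\binom{\Delta k}{2m}(2m)!/(2^m m!)\le (C\Delta k)^{m}\), which gives the same \((k/n)^{\varepsilon k}\) structure.

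To transfer to the simple graph, note that bounded degrees with \(\sum d_i\to\infty\) linearly give \(\liminf \Pbb(\widetilde G_n\text{ simple})>0\) by the standard Poisson limit for loops and multi-edges. Conditioning therefore preserves \(o(1)\) probabilities. Random regular graphs are the special case \(d_i\equiv d\).

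\textbf{Conclusion and main obstacle.} With both parts of the assumption verified, Theorem~\ref{thm:abstract-averaged} applies directly and yields the stated \(\eta,s,c\). I expect the union-bound calculation to be the main obstacle: the constant must be tracked so that one fixed \(\eta\) works for all \(k\), and the sum over small \(k\) has to be handled carefully so that it is \(o(1)\) rather than merely bounded. If the crude bound fails at small \(k\), I would instead use the fact that w.h.p.\ no set of \(k\le \log n\) vertices contains more than \(k+1\) edges; this is the standard sparse-cycle structure, and it beats \((1+\varepsilon)k\) once \(k>1/\varepsilon\).
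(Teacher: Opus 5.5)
Your proposal is correct and follows essentially the same route as the paper. You check the edge density deterministically for the regular and configuration models and by Chebyshev for Erd\H{o}s--R\'enyi. You verify small-set sparsity by a first-moment union bound over $k$-sets; your count of $m$-edge subsets is the paper's binomial and pairing tail bound. You split the sum at $k\approx\log n$, transfer from the pairing model to the simple graph because $\Pbb(\widetilde G_n\text{ simple})$ is bounded away from $0$, treat random regular graphs as $d_i\equiv d$, and invoke Theorem~\ref{thm:abstract-averaged}, all as the paper does. The fallback in your last paragraph is unnecessary, since $\sum_{k\le\log n}\bigl(C(\log n/n)^{\varepsilon}\bigr)^k\to 0$ already handles small $k$.
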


\begin{remark}
In the three cases of Corollary~\ref{cor:examples-averaged}, the asymptotic edge density is
\[
\alpha=\frac d2,\qquad \alpha=\frac{\lambda}{2},\qquad \alpha=\frac{\rho}{2},
\]
respectively.
\end{remark}
For \(m>2c_{\theta,u}\) and \(\theta>1\), define $T_{\theta,u}(m):=
\frac{\theta\log\theta}{(\theta-1)\bigl(\frac m2-c_{\theta,u}\bigr)}$.
\begin{corollary}[Examples: pointwise macroscopic-loop bounds]
\label{cor:examples-pointwise}
Fix \(u\in[0,1]\) and \(\theta\in\mathbb N\) with \(\theta>1\).

\begin{enumerate}[label=\textnormal{(\roman*)}]
    \item \textbf{Random regular graphs.}
    Let \(G_n\) be the random \(d\)-regular graph on \([n]\).
    If $d>2c_{\theta,u}$, then for every \(\delta>0\) there exist \(\eta>0\) and \(c>0\) such that for every $t\ge T_{\theta,u}(d)+\delta$, one has
    \[
    p_{d,n}\!\left(
    \Pbb^{G_n}_{\theta,t,u}(A_\eta)\ge c
    \right)\xrightarrow[n\to\infty]{}1.
    \]

    \item \textbf{Sparse Erd\H{o}s--R\'enyi graphs.}
    Let \(G_n\sim G(n,\lambda/n)\). If $\lambda>2c_{\theta,u}$, then for every \(\delta>0\) there exist \(\eta>0\) and \(c>0\) such that for every $t\ge T_{\theta,u}(\lambda)+\delta$, one has
    \[
    \Pbb\!\left(
    \Pbb^{G_n}_{\theta,t,u}(A_\eta)\ge c
    \right)\xrightarrow[n\to\infty]{}1.
    \]

    \item \textbf{Simple bounded-degree configuration model.}
    Let \(G_n\) be the simple graph obtained from the configuration model with
    degree sequence \((d_i^{(n)})_{i=1}^n\). Assume that
    \[
    1\le d_i^{(n)}\le \Delta
    \qquad\text{for all }i,n,
    \]
    and $\frac1n\sum_{i=1}^n d_i^{(n)}\to \rho$. If $\rho>2c_{\theta,u}$,
    then for every \(\delta>0\) there exist \(\eta>0\) and \(c>0\) such that for every  $t\ge T_{\theta,u}(\rho)+\delta$,
    one has
    \[
    \Pbb\!\left(
    \Pbb^{G_n}_{\theta,t,u}(A_\eta)\ge c
    \right)\xrightarrow[n\to\infty]{}1.
    \]
\end{enumerate}
\end{corollary}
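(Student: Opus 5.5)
The pointwise statement is Theorem~\ref{thm:abstract-pointwise}, applied to the three ensembles. The plan is to check, for each ensemble, the two parts of Assumption~\ref{ass:sparse-random-graph} with \(\alpha=m/2\), where \(m\in\{d,\lambda,\rho\}\). Since \(\alpha-c_{\theta,u}=\frac m2-c_{\theta,u}\), the threshold time in Theorem~\ref{thm:abstract-pointwise} equals \(T_{\theta,u}(m)\). So once both parts of the assumption are verified, each item follows directly, with the same \(\delta\), \(\eta\) and \(c\).

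\textbf{Edge density (part \ref{ass:sparse-random-graph-b}).}
\begin{itemize}
\item For \(d\)-regular graphs, \(|E|=nd/2\) deterministically.
\item For \(G(n,\lambda/n)\), \(|E|\sim\mathrm{Bin}(\binom n2,\lambda/n)\). Its mean is \(\lambda(n-1)/2\) and its variance is \(O(n)\), so Chebyshev's inequality gives \(|E|/n\to\lambda/2\) in probability.
\item For the simple configuration model, \(|E|=\frac12\sum_i d_i^{(n)}\) deterministically, since conditioning on simplicity removes no edges. Hence \(|E|/n\to\rho/2\).
\end{itemize}

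\textbf{Small-set sparsity (part \ref{ass:sparse-random-graph-a}).} This is the main step; it is exactly what Section~4 of the paper establishes. Fix \(\varepsilon>0\) and run a first-moment union bound over pairs \((S,F)\), where \(|S|=k\le\eta n\) and \(F\) is a set of \(\lceil(1+\varepsilon)k\rceil\) edges inside \(S\).

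For \(G(n,\lambda/n)\), let \(\ell=\lceil(1+\varepsilon)k\rceil\). The expected number of such pairs is at most
\[
\binom nk\binom{\binom k2}{\ell}\Bigl(\frac\lambda n\Bigr)^{\ell}\le \Bigl(\frac{en}{k}\Bigr)^k\Bigl(\frac{ek\lambda}{2(1+\varepsilon)n}\Bigr)^{\ell}\le \Bigl[C\Bigl(\frac kn\Bigr)^{\varepsilon}\Bigr]^k,
\]
with \(C=C(\lambda,\varepsilon)\).
\begin{itemize}
\item For \(k\le \log^2 n\), say, each term is \(O(n^{-\varepsilon k/2})\), and the sum is \(o(1)\).
\item For \(\log^2 n<k\le\eta n\), choose \(\eta\) so small that \(C\eta^{\varepsilon}<1/2\). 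Then each term is at most \(2^{-k}\), and the sum is again \(o(1)\).
\end{itemize}
Note that \(k\) is bounded below by about \(1/\varepsilon\): a set with \(k\) vertices spans at most \(k(k-1)/2\) edges, so violating the bound needs \(k\ge 3+2\varepsilon\).

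For the configuration multigraph with degrees at most \(\Delta\), the probability that a prescribed set of \(\ell\) pairings of half-edges inside \(S\) all occur is at most \((C/n)^{\ell}\), because the total number of half-edges is at least \(n\) by \(d_i\ge1\). The number of choices of half-edge pairs inside \(S\) is at most \(\binom{\Delta k}{2\ell}(2\ell)!!\le (C'k)^{\ell}\). This reproduces the same bound \([C(k/n)^{\varepsilon}]^k\).

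To pass from the multigraph to the simple graph, I use that \(\Pbb(\widetilde G_n\text{ simple})\) is bounded away from \(0\) for bounded degrees. This is the standard fact that the numbers of self-loops and multi-edges are asymptotically Poisson with bounded means. Therefore an \(o(1)\) bad-event probability for \(\widetilde G_n\) remains \(o(1)\) after conditioning on simplicity. Random \(d\)-regular graphs are the special case \(d_i\equiv d\).

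I expect the only delicate point to be the uniform summation over the full range \(1\le k\le\eta n\). The split between small \(k\) and \(k\) up to \(\eta n\) described above handles it.
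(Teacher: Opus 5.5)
Your proposal is correct and follows the paper's proof. You apply Theorem~\ref{thm:abstract-pointwise} with $\alpha=m/2$, so the threshold becomes $T_{\theta,u}(m)$; you get part (b) from the deterministic edge counts and Chebyshev as in Lemma~\ref{lem:ER-edge-density}; and you get part (a) from the same first-moment union bounds and simplicity argument as Lemma~\ref{lem:A1-ER}, Lemma~\ref{lem:A1-CM-pairing} and Corollaries~\ref{cor:A1-CM}, \ref{cor:A1-RRG}, with your union over edge sets $F$ and split at $\log^2 n$ being only cosmetic variants. One harmless slip: the unused side remark that violating sets need $k$ of order $1/\varepsilon$ is wrong, since only $k>3+2\varepsilon$ is forced, as your own parenthetical shows.
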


\begin{remark}
The small-set sparsity assumption is also satisfied, in a trivial way, by tree-like models.
For instance, if \(G_n\) is a conditioned Galton--Watson tree on \(n\) vertices, then every
induced subgraph is a forest, and hence
\[
e_{G_n}(S)\le |S|-1
\qquad\text{for every }S\subset V(G_n).
\]
Thus \(\mathcal E_n(\eta,\varepsilon)\) holds deterministically for every \(\eta\le 1\) and every
\(\varepsilon>0\). However,
\[
\frac{|E(G_n)|}{n}=\frac{n-1}{n}\to 1,
\]
so the criterion does not yield macroscopic loops in this case, since it requires the asymptotic
edge density to be strictly larger than \(c_{\theta,u}>1\).
\end{remark}

\section{Deterministic mechanism}

The main deterministic input is the following reduction of the random loop model
to the graph-theoretic condition \eqref{eq:A1}. We state it first and prove it at
the end of this section.

\begin{proposition}[Deterministic drift bound under small-set sparsity]
\label{prop:deterministic-drift}
Let \(G=(V,E)\) be a finite graph with \(|V|=n\), and assume that \eqref{eq:A1}
holds for some \(\eta,\varepsilon>0\). Then
\begin{equation}
D_{\theta,u}(t)
\le
\bigl(c_{\theta,u}(1+\varepsilon)n-|E|\bigr)
+
\bigl((1+\theta)|E|-c_{\theta,u}(1+\varepsilon)n\bigr)\,
\Pbb^G_{\theta,t,u}(A_\eta),
\label{eq:deterministic-drift}
\end{equation}
where
\[
D_{\theta,u}(t)=
\begin{cases}
\dfrac{\dd}{\dd t}\,\Ebb^G_{1,t,u}\bigl[\lambda(\omega)\bigr], & \theta=1,\\[1.2ex]
\dfrac{\theta}{\theta-1}\dfrac{\dd}{\dd t}\log Z_G(\theta,t,u), & \theta\neq 1.
\end{cases}
\]
\end{proposition}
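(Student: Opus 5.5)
We prove Proposition~\ref{prop:deterministic-drift} by combining three ingredients: a Mecke/Russo-type formula for the derivative in \(t\), a case analysis of how one inserted mark changes \(\lambda\), and a bound on the relevant insertion volume in terms of \(e_G(S)\).

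\emph{Step 1: differential identity.}
Since \(\rho^G_{t,u}\) is a Poisson process with intensity \(t\,\dd s\otimes(u\delta_\times+(1-u)\delta_\mid)\) on each edge, the Mecke formula gives
\[
\frac{\dd}{\dd t}\int F\,\dd\rho^G_{t,u}
=\sum_{e\in E}\int_{S^1}\Bigl(u\,\Ebb_\rho\bigl[F(\omega+(e,s,\times))-F(\omega)\bigr]+(1-u)\,\Ebb_\rho\bigl[F(\omega+(e,s,\mid))-F(\omega)\bigr]\Bigr)\dd s .
\]
For \(F=\theta^{\lambda}\) and \(\theta\neq1\), dividing by \(Z_G\) yields
\[
\frac{\dd}{\dd t}\log Z_G=\Ebb^G_{\theta,t,u}\Bigl[\sum_e\int_{S^1}\bigl(u(\theta^{\Delta_\times}-1)+(1-u)(\theta^{\Delta_\mid}-1)\bigr)\dd s\Bigr],
\]
where \(\Delta_\times,\Delta_\mid\in\{-1,0,+1\}\) are the changes in \(\lambda\) caused by inserting the mark. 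For \(\theta=1\) one takes \(F=\lambda\), which gives the same formula with \(\theta^{\Delta}-1\) replaced by \(\Delta\).

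\emph{Step 2: split--merge--rewire analysis.}
We classify the insertion point \((e,s)\) with \(e=\{x,y\}\):
\begin{itemize}[label=\(\circ\)]
\item If \((x,s)\) and \((y,s)\) lie on different loops, either mark merges them, so \(\Delta=-1\) and \(\frac{\theta}{\theta-1}(\theta^{-1}-1)=-1\).
\item If they lie on the same loop, one of the two mark types splits the loop (\(\Delta=+1\)). The other either splits it or rewires it without changing \(\lambda\) (\(\Delta=0\)), depending on the relative orientation in which the loop traverses \(x\) and \(y\) at height \(s\).
\end{itemize}
The contribution of a same-loop point is therefore at most
\[
\tfrac{\theta}{\theta-1}\bigl(\max\{u,1-u\}(\theta-1)+\min\{u,1-u\}\cdot(\text{either }0\text{ or }\theta-1)\bigr)\le \theta .
\]
A finer bookkeeping, charging \(\theta\max\{u,1-u\}\) and keeping the \(-1\) baseline, gives the form
\[
D\le -|E|+\int\Ebb\bigl[\#\{\text{same-loop }(e,s)\}\bigr]\,(1+\theta)\text{-type weight}.
\]
More precisely, the target is
\[
D\le -|E|+\sum_e\int_{S^1}\Pbb\bigl((x,s),(y,s)\text{ on same loop}\bigr)\,\dd s\cdot w,
\]
with \(w=1+\theta\max\{u,1-u\}=c_{\theta,u}\) as the crude per-slice weight and \(w=1+\theta\) as the worst-case weight. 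I expect to need two bounds: \(w\le 1+\theta\) for the fraction of \(E\) counted on \(A_\eta\), and the refined constant \(c_{\theta,u}\) via Step 3 off \(A_\eta\).

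\emph{Step 3: slice estimate.}
On \(A_\eta^c\), every loop \(\gamma\) has \(|S_\gamma(s)|\le|\operatorname{supp}_V\gamma|\le\eta n\). The number of same-loop edges at height \(s\) is
\[
\sum_\gamma e_G(S_\gamma(s))\le(1+\varepsilon)\sum_\gamma|S_\gamma(s)|\le(1+\varepsilon)n
\]
by \eqref{eq:A1}, because the slices partition \(V\). Integrating over \(s\) bounds the same-loop volume by \((1+\varepsilon)n\). Weighted by \(c_{\theta,u}\) and added to the baseline \(-|E|\), this gives \(c_{\theta,u}(1+\varepsilon)n-|E|\) on \(A_\eta^c\). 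On \(A_\eta\), the same-loop volume is trivially at most \(|E|\), so the bound there is \(\theta|E|\). Splitting the expectation as \((1-\Pbb(A_\eta))\cdot(\text{first bound})+\Pbb(A_\eta)\cdot\theta|E|\) and rearranging yields exactly \eqref{eq:deterministic-drift}, since
\[
\theta|E|-\bigl(c_{\theta,u}(1+\varepsilon)n-|E|\bigr)=(1+\theta)|E|-c_{\theta,u}(1+\varepsilon)n .
\]

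\emph{Main obstacle.}
The hardest step is Step 2: proving that for a same-loop insertion the weighted increment, including the \(+1\) that offsets the merge baseline, is at most \(c_{\theta,u}=1+\theta\max\{u,1-u\}\). This requires showing that at most one of the mark types splits unless the orientations allow both. I would establish this by tracking the two traversal directions of the loop through \((x,s)\) and \((y,s)\). If the loop passes both points in the same vertical direction, a cross splits and a bar rewires. If it passes them in opposite directions, a bar splits and a cross rewires. In either case only a single mark type, of probability at most \(\max\{u,1-u\}\), yields \(\theta^{+1}\), which gives the constant \(c_{\theta,u}\).
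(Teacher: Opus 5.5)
Your proposal is correct and follows the paper's route step for step. You use the Poisson perturbation formula and the split--merge--rewire case analysis (Lemma~\ref{lem:local-surgery}) to get the exact identity $D_{\theta,u}(t)=\Ebb^G_{\theta,t,u}[(1+\theta u)J_++(1+\theta(1-u))J_--|E|]$, bound it by $c_{\theta,u}(J_++J_-)-|E|$, and split on $A_\eta$, with the slice estimate off $A_\eta$ and $J_++J_-\le|E|$ on it. Your direct bound $\theta|E|$ on $A_\eta$ only performs the paper's final weakening $c_{\theta,u}\le 1+\theta$ one step earlier. One thing to fix in Step~2: drop the hedge that the second mark type might also split. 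Replace it by the exact dichotomy you state under ``Main obstacle'' (same orientation: cross splits, bar rewires; opposite orientation: the reverse). That dichotomy is what yields $c_{\theta,u}$ instead of the cruder $1+\theta$.
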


\subsection{Local split--merge--rewire structure}

We begin by formulating the local split--merge--rewire structure. Define
\[
T(\omega):=\{s\in S^1:\text{there is at least one mark of }\omega\text{ at time }s\}.
\]
Since \(\omega\) is finite, the set \(T(\omega)\) is finite. We call
\(s\in S^1\setminus T(\omega)\) a \emph{regular time}.
At every regular time \(s\in S^1\setminus T(\omega)\), each loop of \(\cL(\omega)\)
meets \(V\times\{s\}\) transversally. Hence, for every point \((x,s)\) lying on a
loop, the loop has a well-defined local vertical orientation at \((x,s)\), which we
identify with a sign in \(\{-1,+1\}\).
Now fix a regular time \(s\in S^1\setminus T(\omega)\) and an edge \(e=\{x,y\}\in E\).
Then exactly one of the following three possibilities occurs:
\begin{enumerate}[label=(\alph*)]
    \item \((x,s)\) and \((y,s)\) belong to distinct loops;
    \item \((x,s)\) and \((y,s)\) belong to the same loop and have the same local
    vertical orientation;
    \item \((x,s)\) and \((y,s)\) belong to the same loop and have opposite local
    vertical orientations.
\end{enumerate}
Accordingly, we define
\[
I_+(\omega;e,s)
:=
\1_{\{(x,s)\text{ and }(y,s)\text{ belong to the same loop with the same local vertical orientation}\}},
\]
\[
I_-(\omega;e,s)
:=
\1_{\{(x,s)\text{ and }(y,s)\text{ belong to the same loop with opposite local vertical orientations}\}},
\]
and
\[
I_{\neq}(\omega;e,s):=1-I_+(\omega;e,s)-I_-(\omega;e,s).
\]
At irregular times \(s\in T(\omega)\), we set all three indicators equal to \(0\).
Since \(T(\omega)\) is finite, this convention does not affect any of the integral
identities below.
We then define
\[
J_+(\omega):=\sum_{e\in E}\int_{S^1} I_+(\omega;e,s)\,\dd s,
\qquad
J_-(\omega):=\sum_{e\in E}\int_{S^1} I_-(\omega;e,s)\,\dd s.
\]

\begin{lemma}[Local split--merge--rewire rule]
\label{lem:local-surgery}
Let \(\omega\) be a configuration, let \(\cL(\omega)\) be the associated loop
configuration, let \(e=\{x,y\}\in E\), and let \(s\in S^1\setminus T(\omega)\) be a
regular time. Consider the effect of inserting at \((e,s)\) either a cross or a bar.

\begin{enumerate}[label=(\roman*)]
    \item If \(I_{\neq}(\omega;e,s)=1\), then inserting either a cross or a bar
    merges the two loops containing \((x,s)\) and \((y,s)\) into one.

    \item If \(I_+(\omega;e,s)=1\), then inserting a cross splits the loop into two,
    whereas inserting a bar rewires the loop without changing the number of loops.

    \item If \(I_-(\omega;e,s)=1\), then inserting a bar splits the loop into two,
    whereas inserting a cross rewires the loop without changing the number of loops.
\end{enumerate}
\end{lemma}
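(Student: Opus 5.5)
A loop in \(\cL(\omega)\) is a finite union of vertical segments joined at marks, and inserting a new mark at \((e,s)\) alters the trajectories only in an infinitesimal neighbourhood of the two points \((x,s)\) and \((y,s)\). I therefore plan to treat each loop locally as a closed curve through these points and read off the reconnection. Fix a regular time \(s\) and cut the cylinder at height \(s\) just at the columns \(x\) and \(y\). This opens up to four strand ends: \(x^-,x^+\) just below and above \((x,s)\), and \(y^-,y^+\) just below and above \((y,s)\). Before the insertion, the connections are \(x^-\!-x^+\) and \(y^-\!-y^+\), both vertical. After inserting a cross, the particle arriving at \(x\) from below continues upward at \(y\), so the new connections are \(x^-\!-y^+\) and \(y^-\!-x^+\). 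After inserting a bar, the particle arriving at \(x\) from below returns downward at \(y\), so the connections are \(x^-\!-y^-\) and \(x^+\!-y^+\). Away from these four ends, the rest of every trajectory is unchanged.

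The key step is to encode how the four ends are paired by the remaining loop structure, that is, by the portion of \(\cL(\omega)\) outside the small neighbourhood. Case (a): the ends \(x^+\) and \(x^-\) are joined by one arc of the loop through \((x,s)\), and \(y^\pm\) by an arc of a different loop. Any of the new pairings connects an \(x\)-end with a \(y\)-end on both sides. Following the resulting curve traverses the whole \(x\)-arc and then the whole \(y\)-arc, giving a single loop. This proves (i) for both mark types.

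If \((x,s)\) and \((y,s)\) lie on the same loop, I traverse that loop starting from \(x^+\) upward. It leaves through \(x^-\) only at the end, and in between it passes the column \(y\) at height \(s\) exactly once, either upward (from \(y^-\) to \(y^+\)) or downward (from \(y^+\) to \(y^-\)). Since the vertical orientation at \((x,s)\) is upward along this traversal, the first option is exactly \(I_+=1\) and the second is \(I_-=1\).

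In the \(I_+\) case, the outer arcs are \(x^+\to\cdots\to y^-\) and \(y^+\to\cdots\to x^-\).
\begin{itemize}
\item With a cross, the new pairings are \(x^-\!-y^+\) and \(y^-\!-x^+\). Each closes one arc on itself, giving two loops, which is a split.
\item With a bar, the pairings are \(x^-\!-y^-\) and \(x^+\!-y^+\). These chain the two arcs into one curve, one of them reversed, which is a single loop: a rewire.
\end{itemize}
In the \(I_-\) case, the arcs are \(x^+\to\cdots\to y^+\) and \(y^-\to\cdots\to x^-\), and the same check with the roles exchanged gives (iii): the bar closes each arc, while the cross joins them.

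The main obstacle is the bookkeeping of orientations. Loops contain both upward and downward segments because bars reverse direction, so I must use the local orientation at regular times rather than a global one. Regularity of \(s\) ensures that \(x\) and \(y\) carry no other marks at height \(s\), so the four ends are well defined. Once the end-pairing description is set up, every case reduces to a finite check of the planar matchings.
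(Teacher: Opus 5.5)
Your proposal is correct and follows essentially the same route as the paper: cut at the four half-strands around \((x,s)\) and \((y,s)\), record how the outer arcs of the loop(s) pair the ends, and check which reconnection a cross or a bar produces. Your explicit labelling \(x^\pm,y^\pm\) and the traversal argument that identifies \(I_+\) or \(I_-\) with the outer arcs \(x^+\to y^-,\ y^+\to x^-\) or \(x^+\to y^+,\ y^-\to x^-\) make precise the paper's informal ``orientation-preserving versus crosswise'' description.
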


\begin{proof}
Take disjoint small neighbourhoods of \((x,s)\) and \((y,s)\) and remove the
vertical strands there. This cuts the configuration locally into four half-strands,
while leaving it unchanged away from these neighbourhoods.
If \(I_{\neq}(\omega;e,s)=1\), then \((x,s)\) and \((y,s)\) lie on two distinct loops.
Cutting at these two points opens the two loops into two open strands, and inserting
either a cross or a bar reconnects these strands into a single loop. Hence the two
loops merge.
Assume now that \((x,s)\) and \((y,s)\) lie on the same loop. Cutting at the two
points decomposes this loop into two complementary oriented arcs.
If \(I_+(\omega;e,s)=1\), then the local vertical orientations at \((x,s)\) and
\((y,s)\) agree. A cross reconnects the half-strands in the orientation-preserving
way, so that each complementary arc closes separately; hence the original loop splits
into two loops. By contrast, a bar reverses the orientation at one endpoint and
reconnects the two arcs crosswise, producing a single loop with changed geometry
but unchanged loop number.
If \(I_-(\omega;e,s)=1\), then the local vertical orientations at \((x,s)\) and
\((y,s)\) are opposite. The roles of crosses and bars are therefore reversed: a bar
closes the two complementary arcs separately and hence splits the loop, whereas a
cross reconnects them crosswise and merely rewires the loop.
\end{proof}

\subsection{Differential identity}

The next lemma is a simple finite-intensity special case of standard perturbation
formulas for Poisson processes; see, for instance,
\cite[Section~6]{Last2014Perturbation}. For completeness we give a short direct proof.

\begin{lemma}[Differentiation formula for a finite-intensity Poisson process]
\label{lem:poisson-diff}
Let \((\mathsf X,\nu)\) be a finite measure space, and for \(t\ge 0\) let \(\Pi_t\) be
a Poisson point process on \(\mathsf X\) with intensity measure \(t\nu\). Then, for
every finite counting measures, the map
\(t\mapsto \Ebb[F(\Pi_t)]\) is differentiable on \((0,\infty)\), and its right
derivative at \(t=0\) is given by
\[
\frac{\dd}{\dd t}\Ebb[F(\Pi_t)]
=
\int_{\mathsf X}\Ebb\!\left[F(\Pi_t+\delta_z)-F(\Pi_t)\right]\nu(\dd z).
\]
\end{lemma}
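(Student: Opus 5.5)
We prove Lemma~\ref{lem:poisson-diff} for bounded measurable \(F\) on finite counting measures (the statement's ``for every finite counting measures'' should be read this way). The approach is to write \(\Ebb[F(\Pi_t)]\) explicitly as a series over the number of points, differentiate term by term, and recognise the result as the stated difference formula.

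Set \(m:=\nu(\mathsf X)<\infty\). If \(m=0\), then \(\Pi_t=0\) almost surely, both sides vanish, and there is nothing to prove. Otherwise let \(\bar\nu:=\nu/m\). Conditionally on \(|\Pi_t|=k\), the points are i.i.d.\ with law \(\bar\nu\), and \(|\Pi_t|\sim\mathrm{Poisson}(tm)\). Hence
\[
\Ebb[F(\Pi_t)]=\sum_{k\ge0}e^{-tm}\frac{t^k}{k!}\int_{\mathsf X^k}F\Bigl(\textstyle\sum_{i=1}^k\delta_{z_i}\Bigr)\,\nu(\dd z_1)\cdots\nu(\dd z_k)=:\sum_{k\ge0}e^{-tm}\frac{t^k}{k!}a_k ,
\]
with \(|a_k|\le\|F\|_\infty m^k\).

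The series \(\sum_k a_k t^k/k!\) is therefore a power series with infinite radius of convergence. So \(t\mapsto\Ebb[F(\Pi_t)]\) is real-analytic on \(\mathbb R\), in particular differentiable on \((0,\infty)\) and right-differentiable at \(0\), and we may differentiate term by term:
\[
\frac{\dd}{\dd t}\Ebb[F(\Pi_t)]=-m\,\Ebb[F(\Pi_t)]+e^{-tm}\sum_{k\ge0}\frac{t^k}{k!}a_{k+1}.
\]

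It remains to identify the two terms. By Fubini, we can split off one integration variable:
\[
a_{k+1}=\int_{\mathsf X}\Bigl(\int_{\mathsf X^k}F\Bigl(\textstyle\sum_{i=1}^k\delta_{z_i}+\delta_z\Bigr)\nu^{\otimes k}(\dd z_{1:k})\Bigr)\nu(\dd z).
\]
Inserting this into the second term and exchanging sum and integral gives \(\int_{\mathsf X}\Ebb[F(\Pi_t+\delta_z)]\,\nu(\dd z)\). The exchange is justified by absolute convergence, since everything is dominated by \(\|F\|_\infty\sum_k m^{k+1}t^k/k!\). For the first term,
\[
m\,\Ebb[F(\Pi_t)]=\int_{\mathsf X}\Ebb[F(\Pi_t)]\,\nu(\dd z).
\]
Subtracting the two expressions gives exactly the claimed formula.

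The only delicate point is justifying the term-by-term differentiation and the interchanges of sum and integral, and boundedness of \(F\) together with \(m<\infty\) handles both. In the application \(F=\theta^{\lambda}\) is not bounded. There we use \(\lambda(\omega)\le n+|\omega|\) (each mark changes the loop count by at most one, by Lemma~\ref{lem:local-surgery}), so \(|F(\mu)|\le C^{1+|\mu|}\) with \(C:=\max\{\theta,\theta^{-1}\}^{\,n}\vee\max\{\theta,\theta^{-1}\}\). Then \(|a_k|\le C^{1+k}m^k\), the series still has infinite radius of convergence, and the same argument applies verbatim.
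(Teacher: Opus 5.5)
Your proof is correct, but it takes a different route from the paper's. The paper argues by superposition. It writes $\Pi_{t+h}\overset{d}{=}\Pi_t+\Xi_h$, where $\Xi_h$ is an independent Poisson process of intensity $h\nu$ with $N_h:=\Xi_h(\mathsf X)$ atoms. It then splits according to $N_h=0$, $N_h=1$ or $N_h\ge 2$, uses boundedness of $F$ to make the last case $O(h^2)$, and reads off the integral against $\nu$ from the one-atom case. You instead use the mixture representation $\Ebb[F(\Pi_t)]=e^{-tm}\sum_k a_k t^k/k!$, differentiate an entire power series, and identify the result by Fubini.

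The paper's argument is shorter and more probabilistic, but as written it lets $h\downarrow 0$ and so only gives the right derivative at each $t\ge 0$. Two-sided differentiability on $(0,\infty)$ then needs a small extra step, for instance continuity in $t$ of the right-hand side. Your series gives real-analyticity, and hence full differentiability, directly.

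Your approach also extends verbatim to exponentially bounded $F$, and this matters: your remark that $\theta^{\lambda}$ is unbounded is right. The paper's claim $\lambda(\omega)\in\{1,\dots,|V|\}$ in the proof of Proposition~\ref{prop:exact-drift} fails whenever $u<1$. For example, $k\ge 2$ bars on a single edge $\{x,y\}$ cut $\{x,y\}\times S^1$ into $k$ loops, each confined to a time interval. The bound $\lambda(\omega)\le n+|\omega|$, which you derive from Lemma~\ref{lem:local-surgery}, is what actually justifies applying the lemma to $F=\theta^{\lambda}$ with $\theta>1$ and to $F=\lambda$. The derivation is valid because almost surely the marks sit at distinct heights, so they can be inserted one at a time at regular times.

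The paper's coupling proof could be repaired in the same way. One uses independence of $\Pi_t$ and $\Xi_h$ together with $\Ebb[C^{\Pi_t(\mathsf X)}]<\infty$ and $\Ebb[C^{N_h}\1_{\{N_h\ge 2\}}]=O(h^2)$ to control the case $N_h\ge 2$.
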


\begin{proof}
If \(\nu(\mathsf X)=0\), then \(\Pi_t\) is almost surely empty for every \(t\), and
both sides vanish identically. Thus assume \(\nu(\mathsf X)>0\).

Fix \(t\ge 0\), and let \(\Xi_h\) be an independent Poisson point process on
\(\mathsf X\) with intensity \(h\nu\), \(h>0\). Then
\[
\Pi_{t+h}\overset{d}{=}\Pi_t+\Xi_h,
\]
and therefore
\[
\Ebb[F(\Pi_{t+h})]-\Ebb[F(\Pi_t)]
=
\Ebb\!\left[F(\Pi_t+\Xi_h)-F(\Pi_t)\right].
\]
Since \(\nu(\mathsf X)<\infty\), the total number \(N_h:=\Xi_h(\mathsf X)\) is
Poisson with mean \(h\nu(\mathsf X)\). In particular,
\[
\Pbb(N_h=0)=1-h\nu(\mathsf X)+O(h^2),
\qquad
\Pbb(N_h=1)=h\nu(\mathsf X)+O(h^2),
\qquad
\Pbb(N_h\ge 2)=O(h^2).
\]
Because \(F\) is bounded, the contribution of the event \(\{N_h\ge 2\}\) is
\(O(h^2)\).

On the event \(\{N_h=1\}\), the unique atom \(Z_h\) has distribution
\(\nu/\nu(\mathsf X)\). Hence
\[
\Ebb\!\left[F(\Pi_t+\Xi_h)-F(\Pi_t)\,\middle|\,N_h=1\right]
=
\frac{1}{\nu(\mathsf X)}
\int_{\mathsf X}
\Ebb\!\left[F(\Pi_t+\delta_z)-F(\Pi_t)\right]\nu(\dd z).
\]
Combining the three cases, we obtain
\[
\Ebb[F(\Pi_{t+h})]-\Ebb[F(\Pi_t)]
=
h\int_{\mathsf X}
\Ebb\!\left[F(\Pi_t+\delta_z)-F(\Pi_t)\right]\nu(\dd z)
+O(h^2).
\]
Dividing by \(h\) and letting \(h\downarrow 0\) gives the claim.
\end{proof}

\begin{proposition}[Differential identity]
\label{prop:exact-drift}
Let \(G=(V,E)\) be a finite graph, let \(u\in[0,1]\), and let \(\theta>0\). Then
\[
D_{\theta,u}(t)
=
\Ebb_{\theta,t,u}^G\!\left[
(1+\theta u)J_+ + (1+\theta(1-u))J_- - |E|
\right],
\]
where
\[
D_{\theta,u}(t)=
\begin{cases}
\dfrac{\dd}{\dd t}\,\Ebb_{1,t,u}^G[\lambda(\omega)], & \theta=1,\\[1.2ex]
\dfrac{\theta}{\theta-1}\dfrac{\dd}{\dd t}\log Z_G(\theta,t,u), & \theta\neq 1.
\end{cases}
\]
\end{proposition}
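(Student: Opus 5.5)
We apply Lemma~\ref{lem:poisson-diff} to the marked Poisson process $\omega$ on $\mathsf X:=E\times S^1\times\{\times,\mid\}$ with intensity $t\nu$, where $\nu:=\sum_{e\in E}\dd s\otimes(u\delta_\times+(1-u)\delta_\mid)$ and $\nu(\mathsf X)=|E|<\infty$. The natural functional for $\theta\ne1$ is $F(\omega)=\theta^{\lambda(\omega)}$, so that $Z_G(\theta,t,u)=\Ebb[F(\omega)]$ under $\rho^G_{t,u}$. This $F$ is not bounded, since $\lambda$ can be arbitrarily large. The first step is therefore to handle boundedness. We will use $\lambda(\omega)\le n+|\omega|$, because each mark changes the loop number by at most one by Lemma~\ref{lem:local-surgery}, and $\lambda(\emptyset)=n$. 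Hence $\theta^{\lambda}\le C^{n+|\omega|}$, and the Poisson tail of $|\omega|$ makes the $O(h^2)$ estimates in the proof of Lemma~\ref{lem:poisson-diff} go through, via dominated convergence on $\{N_h\ge2\}$. The same bound applies to $F=\lambda$ in the case $\theta=1$.

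We then obtain
\[
\frac{\dd}{\dd t}Z_G=\sum_{e}\int_{S^1}\rho^G_{t,u}\Bigl[u\bigl(\theta^{\lambda(\omega+\delta_{(e,s,\times)})}-\theta^{\lambda}\bigr)+(1-u)\bigl(\theta^{\lambda(\omega+\delta_{(e,s,\mid)})}-\theta^{\lambda}\bigr)\Bigr]\dd s .
\]
Almost surely the set $T(\omega)$ is finite, so for a.e.\ $s$ the time is regular and Lemma~\ref{lem:local-surgery} applies. In case $I_{\neq}=1$ both insertions give $\theta^{\lambda-1}$. In case $I_+=1$ a cross gives $\theta^{\lambda+1}$ and a bar gives $\theta^{\lambda}$. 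In case $I_-=1$ a bar gives $\theta^{\lambda+1}$ and a cross gives $\theta^{\lambda}$. The integrand therefore equals
\[
\theta^{\lambda}\Bigl[(\theta^{-1}-1)I_{\neq}+u(\theta-1)I_++(1-u)(\theta-1)I_-\Bigr].
\]
Substituting $I_{\neq}=1-I_+-I_-$ and factoring out $(\theta-1)/\theta$, the bracket becomes
\[
\frac{\theta-1}{\theta}\Bigl[(1+\theta u)I_++(1+\theta(1-u))I_--1\Bigr].
\]
Integrating over $e$ and $s$ turns the indicators into $J_\pm$ and the constant term into $|E|$. Dividing by $Z_G$ and multiplying by $\theta/(\theta-1)$ then gives the claimed identity for $\theta\neq1$.

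For $\theta=1$ we take $F=\lambda$, with the measure equal to $\rho^G_{t,u}$. The increments are $-1$ on $I_{\neq}$, $u$ on $I_+$ and $1-u$ on $I_-$. The integrand is then $-(1-I_+-I_-)+uI_++(1-u)I_-=(1+u)I_++(2-u)I_--1$, which matches the formula at $\theta=1$.

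The main obstacle is the unboundedness of $F$. Lemma~\ref{lem:poisson-diff} is stated only for bounded $F$, so we must redo its proof with the exponential moment bound $\Ebb[C^{|\omega|}]<\infty$ for Poisson variables. We also need local uniformity in $t$ to justify the differentiation. Everything else is bookkeeping from Lemma~\ref{lem:local-surgery}.
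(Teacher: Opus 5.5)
Your proof is correct and follows the paper's route: Lemma~\ref{lem:poisson-diff} applied to $\theta^{\lambda}$ (resp.\ $\lambda$ for $\theta=1$), then the case analysis of Lemma~\ref{lem:local-surgery}, then the same algebra via $\theta^{-1}-1=-(\theta-1)/\theta$. You depart from the paper only on integrability, and there you are right and the paper is not. Its proof declares the functionals bounded because $\lambda(\omega)\in\{1,\dots,|V|\}$, but this fails whenever $u<1$. Indeed, $k\ge 2$ bars on a single edge $\{x,y\}$ and no other marks cut $\{x,y\}\times S^1$ into $k$ loops, so $\lambda=n-2+k$ is unbounded. Hence for $\theta>1$, and for $F=\lambda$ at $\theta=1$, Lemma~\ref{lem:poisson-diff} as stated does not apply. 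Your bound $\lambda(\omega)\le n+|\omega|$, together with $\Ebb[\theta^{|\omega|}]=e^{t|E|(\theta-1)}<\infty$, supplies exactly the missing justification. For $0<\theta<1$ nothing is needed, since $\theta^{\lambda}\le 1$.

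When writing it out, make two points explicit. First, $\lambda\le n+|\omega|$ comes from inserting the marks one at a time. Almost surely they sit at pairwise distinct times, so each insertion is at a regular time of the current configuration and changes $\lambda$ by at most one. Second, the remainder on $\{N_h\ge 2\}$ is a direct computation rather than dominated convergence. By independence of $\Pi_t$ and $\Xi_h$, $\Ebb[C^{|\Pi_t|+N_h}\1_{\{N_h\ge2\}}]=e^{t|E|(C-1)}\,\Ebb[C^{N_h}\1_{\{N_h\ge2\}}]=O(h^2)$, locally uniformly in $t$.

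That gives the right derivative. The two-sided derivative is also left implicit in the paper's proof of Lemma~\ref{lem:poisson-diff}. The quickest way to get it is the expansion $\Ebb[F(\Pi_t)]=e^{-t|E|}\sum_{k\ge0}\frac{t^k}{k!}\int F(\delta_{z_1}+\dots+\delta_{z_k})\,\nu^{\otimes k}(\dd z)$. This is entire in $t$ because the $k$-th integrand is bounded by $C^{n+k}$, so you can differentiate termwise.

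The same false bound $1\le\lambda\le n$ is reused later in the paper, for instance to get $\theta\le Z_G\le\theta^n$. So your observation matters beyond this proposition.
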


\begin{proof}
We encode the model as a marked Poisson point process on
\[
\mathsf X:=E\times S^1\times\{\times,\mid\}.
\]
Let \(\nu_u\) be the finite measure on \(\mathsf X\) defined by
\[
\nu_u(\dd e,\dd s,\dd m)
:=
\sum_{e\in E}\dd s\,
\bigl(u\,\delta_{\times}+(1-u)\,\delta_{\mid}\bigr)(\dd m).
\]
Thus \(\nu_u(\mathsf X)=|E|\), and the underlying random loop configuration is a
Poisson point process \(\omega\) on \(\mathsf X\) with intensity measure \(t\nu_u\).
Its law is denoted by \(\rho_{t,u}^G\).

For \(z=(e,s,m)\in\mathsf X\), we write \(\omega+\delta_z\) for the configuration
obtained by inserting one additional marked point \(z\). Since
\(\lambda(\omega)\in\{1,\dots,|V|\}\), the functionals
\[
F_\theta(\omega):=\theta^{\lambda(\omega)}
\qquad\text{and}\qquad
F_1(\omega):=\lambda(\omega)
\]
are bounded.
We first treat the case \(\theta\neq 1\). By Lemma~\ref{lem:poisson-diff},
\[
\frac{\dd}{\dd t}Z_G(\theta,t,u)
=
\int_{\mathsf X}
\Ebb_{\rho_{t,u}^G}\!\left[
\theta^{\lambda(\omega+\delta_z)}-\theta^{\lambda(\omega)}
\right]\nu_u(\dd z).
\]
Write \(z=(e,s,m)\). Since the exceptional set \(T(\omega)\subset S^1\) is finite,
the identities from Lemma~\ref{lem:local-surgery} hold for \(\nu_u\)-almost every
\((e,s,m)\).
If \(m=\times\), then Lemma~\ref{lem:local-surgery} implies
\[
\lambda(\omega+\delta_{(e,s,\times)})-\lambda(\omega)
=
I_+(\omega;e,s)-I_{\neq}(\omega;e,s).
\]
Likewise, if \(m=\mid\), then
\[
\lambda(\omega+\delta_{(e,s,\mid)})-\lambda(\omega)
=
I_-(\omega;e,s)-I_{\neq}(\omega;e,s).
\]
Hence, for \(\nu_u\)-almost every \((e,s,m)\),
\[
\theta^{\lambda(\omega+\delta_{(e,s,\times)})}-\theta^{\lambda(\omega)}
=
\theta^{\lambda(\omega)}
\Bigl((\theta-1)I_+ + (\theta^{-1}-1)I_{\neq}\Bigr),
\]
and
\[
\theta^{\lambda(\omega+\delta_{(e,s,\mid)})}-\theta^{\lambda(\omega)}
=
\theta^{\lambda(\omega)}
\Bigl((\theta-1)I_- + (\theta^{-1}-1)I_{\neq}\Bigr).
\]
Integrating over the mark distribution
\(u\delta_\times+(1-u)\delta_{\mid}\), we get
\[
\int_{\{\times,\mid\}}
\bigl(\theta^{\lambda(\omega+\delta_{(e,s,m)})}-\theta^{\lambda(\omega)}\bigr)
\bigl(u\delta_\times+(1-u)\delta_{\mid}\bigr)(\dd m)
\]
\[
=
\theta^{\lambda(\omega)}
\Bigl(
(\theta-1)u\,I_+ + (\theta-1)(1-u)\,I_- + (\theta^{-1}-1)I_{\neq}
\Bigr).
\]
Now note that for almost every \((e,s)\),
\[
I_+(\omega;e,s)+I_-(\omega;e,s)+I_{\neq}(\omega;e,s)=1,
\]
because exactly one of the three alternatives above holds. Therefore
\[
\sum_{e\in E}\int_{S^1} I_{\neq}(\omega;e,s)\,\dd s
=
|E|-J_+(\omega)-J_-(\omega).
\]
Using also
\[
\sum_{e\in E}\int_{S^1} I_+(\omega;e,s)\,\dd s = J_+(\omega),
\qquad
\sum_{e\in E}\int_{S^1} I_-(\omega;e,s)\,\dd s = J_-(\omega),
\]
we obtain
\[
\frac{\dd}{\dd t}Z_G(\theta,t,u)
=
\Ebb_{\rho_{t,u}^G}\!\left[
\theta^{\lambda(\omega)}
\Bigl(
(\theta-1)u\,J_+
+
(\theta-1)(1-u)\,J_-
+
(\theta^{-1}-1)(|E|-J_+-J_-)
\Bigr)
\right].
\]
Since \(\theta^{-1}-1=-(\theta-1)/\theta\), the bracket equals
\[
\frac{\theta-1}{\theta}
\Bigl(
(1+\theta u)J_+ + (1+\theta(1-u))J_- - |E|
\Bigr).
\]
Therefore
\[
\frac{\dd}{\dd t}Z_G(\theta,t,u)
=
\frac{\theta-1}{\theta}\,
\Ebb_{\rho_{t,u}^G}\!\left[
\theta^{\lambda(\omega)}
\Bigl(
(1+\theta u)J_+ + (1+\theta(1-u))J_- - |E|
\Bigr)
\right].
\]
Dividing by \(Z_G(\theta,t,u)\) yields
\[
\frac{\theta}{\theta-1}\frac{\dd}{\dd t}\log Z_G(\theta,t,u)
=
\Ebb_{\theta,t,u}^G\!\left[
(1+\theta u)J_+ + (1+\theta(1-u))J_- - |E|
\right].
\]
For \(\theta=1\), apply Lemma~\ref{lem:poisson-diff} to the bounded functional
\(F(\omega)=\lambda(\omega)\). Then
\[
\frac{\dd}{\dd t}\Ebb_{1,t,u}^G[\lambda(\omega)]
=
\int_{\mathsf X}
\Ebb_{1,t,u}^G\!\left[
\lambda(\omega+\delta_z)-\lambda(\omega)
\right]\nu_u(\dd z).
\]
By Lemma~\ref{lem:local-surgery},
\[
\lambda(\omega+\delta_{(e,s,\times)})-\lambda(\omega)=I_+-I_{\neq},
\]
\[
\lambda(\omega+\delta_{(e,s,\mid)})-\lambda(\omega)=I_--I_{\neq}
\]
for \(\nu_u\)-almost every \((e,s)\). Averaging over the mark distribution gives
\[
u(I_+-I_{\neq})+(1-u)(I_--I_{\neq})
=
uI_+ + (1-u)I_- - I_{\neq}.
\]
Using \(I_{\neq}=1-I_+-I_-\), this becomes
\[
(1+u)I_+ + (2-u)I_- - 1.
\]
Integrating over \(e\) and \(s\) yields
\[
\frac{\dd}{\dd t}\Ebb_{1,t,u}^G[\lambda(\omega)]
=
\Ebb_{1,t,u}^G\!\left[
(1+u)J_+ + (2-u)J_- - |E|
\right].
\]
Since
\[
1+u=1+\theta u,
\qquad
2-u=1+\theta(1-u)
\qquad\text{when }\theta=1,
\]
this is exactly the claimed formula.
\end{proof}

\subsection{Slice identity}

We now express the same-loop insertion volume in a fixed time slice in terms of
induced edge counts of the corresponding slice sets.
Let \(\omega\) be a configuration. For a regular time \(s\in S^1\setminus T(\omega)\),
define for each edge \(e=\{x,y\}\in E\),
\[
I_{\mathrm{same}}(\omega;e,s)
:=
\1_{\{(x,s)\text{ and }(y,s)\text{ belong to the same loop}\}}.
\]
At irregular times \(s\in T(\omega)\), define \(I_{\mathrm{same}}(\omega;e,s):=0\).

\begin{lemma}[Pointwise slice identity]
\label{lem:pointwise-slice}
Let \(\omega\) be a configuration, and let \(s\in S^1\setminus T(\omega)\) be a regular time. Then
\begin{equation}
I_{\mathrm{same}}(\omega;e,s)=I_+(\omega;e,s)+I_-(\omega;e,s)
\qquad\text{for every }e\in E,
\label{eq:same-split}
\end{equation}
and
\begin{equation}
\sum_{e\in E} I_{\mathrm{same}}(\omega;e,s)
=
\sum_{\gamma\in\cL(\omega)} e_G(S_\gamma(s)).
\label{eq:pointwise-slice}
\end{equation}
\end{lemma}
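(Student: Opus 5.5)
The argument is pure bookkeeping at a fixed regular time \(s\), and I would prove the two identities in turn.

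For \eqref{eq:same-split}, fix \(e=\{x,y\}\). At a regular time each point \((x,s)\) lying on a loop carries a well-defined local vertical orientation in \(\{-1,+1\}\), as recalled before Lemma~\ref{lem:local-surgery}. If \((x,s)\) and \((y,s)\) lie on the same loop, their orientations are either equal or opposite, and these two cases are mutually exclusive. Hence \(I_{\mathrm{same}}=1\) exactly when one of \(I_+\), \(I_-\) equals \(1\), and the two cannot both equal \(1\); this gives \(I_{\mathrm{same}}=I_++I_-\). In the alternative where the points lie on distinct loops, all three indicators vanish.

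For \eqref{eq:pointwise-slice}, the key preliminary fact is that the slices \(\{S_\gamma(s)\}_{\gamma\in\cL(\omega)}\) form a partition of \(V\):
\begin{enumerate}[label=(\roman*)]
\item Every point \((x,s)\) lies on some loop. Run the particle dynamics started from \((x,s)\) in either direction; by periodicity the trajectory is closed, so it is a loop containing \((x,s)\).
\item No point lies on two loops. At a regular time there is no mark at height \(s\), so near \((x,s)\) the only piece of \(\cL(\omega)\) is the single vertical strand through \(x\). The trajectory through \((x,s)\) is then uniquely determined by the deterministic dynamics in both time directions, so two loops sharing \((x,s)\) coincide.
\end{enumerate}
Given the partition, the identity follows by regrouping. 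Each edge \(e=\{x,y\}\) with \(I_{\mathrm{same}}(\omega;e,s)=1\) has both endpoints in \(S_\gamma(s)\) for exactly one loop \(\gamma\), so it is counted exactly once in \(e_G(S_\gamma(s))\). Conversely, every edge induced by some \(S_\gamma(s)\) joins two points of the same loop at height \(s\), so it has \(I_{\mathrm{same}}=1\). Summing over \(e\in E\) and regrouping by loops yields \eqref{eq:pointwise-slice}.

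The only step needing real care is the partition claim, specifically the uniqueness in (ii). A loop may pass through the same vertex \(x\) at height \(s\) only once, since each point \((x,s)\) has exactly one strand through it. It may, however, visit \(x\) at other heights, so \(S_\gamma(s)\) must be understood as a set of vertices and not counted with multiplicity. With this understood, the disjointness is exactly what makes each same-loop edge counted once, and the rest is immediate.
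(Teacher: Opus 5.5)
Your proposal is correct and follows the paper's proof essentially verbatim. The orientation dichotomy at a regular time gives $I_{\mathrm{same}}=I_++I_-$. Since the slices $S_\gamma(s)$ partition $V$, you can write $I_{\mathrm{same}}(\omega;e,s)=\sum_\gamma \1_{\{x\in S_\gamma(s)\}}\1_{\{y\in S_\gamma(s)\}}$ and regroup by loops. The only difference is that you sketch why the loops partition $V\times\{s\}$, which the paper simply asserts; your remark that a loop visits $(x,s)$ only once is not needed, because $S_\gamma(s)$ is a set anyway.
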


\begin{proof}
Fix a regular time \(s\). Since \(s\notin T(\omega)\), no mark is present at height \(s\). Therefore, for every
edge \(e=\{x,y\}\), if \((x,s)\) and \((y,s)\) belong to the same loop, then the loop
has well-defined local vertical orientations at both points. These orientations are
either equal or opposite, and the two possibilities are mutually exclusive. This proves
\eqref{eq:same-split}.
To prove \eqref{eq:pointwise-slice}, fix \(e=\{x,y\}\in E\). Since the loops in
\(\cL(\omega)\) partition \(V\times S^1\), there is a unique loop \(\gamma_x\)
containing \((x,s)\) and a unique loop \(\gamma_y\) containing \((y,s)\). Hence
\[
I_{\mathrm{same}}(\omega;e,s)=1
\quad\Longleftrightarrow\quad
\gamma_x=\gamma_y
\quad\Longleftrightarrow\quad
\exists\,\gamma\in\cL(\omega)\text{ such that }x,y\in S_\gamma(s).
\]
Since the sets \(S_\gamma(s)\), \(\gamma\in\cL(\omega)\), are pairwise disjoint at
fixed \(s\), there is at most one such loop \(\gamma\). Therefore
\[
I_{\mathrm{same}}(\omega;e,s)
=
\sum_{\gamma\in\cL(\omega)}
\1_{\{x\in S_\gamma(s)\}}\1_{\{y\in S_\gamma(s)\}}.
\]
Summing over all \(e=\{x,y\}\in E\), we obtain
\[
\sum_{e\in E} I_{\mathrm{same}}(\omega;e,s)
=
\sum_{\gamma\in\cL(\omega)}
\sum_{\{x,y\}\in E}
\1_{\{x\in S_\gamma(s)\}}\1_{\{y\in S_\gamma(s)\}}.
\]
The inner sum is exactly the number of edges of \(G\) induced by \(S_\gamma(s)\),
namely \(e_G(S_\gamma(s))\). Hence
\[
\sum_{e\in E} I_{\mathrm{same}}(\omega;e,s)
=
\sum_{\gamma\in\cL(\omega)} e_G(S_\gamma(s)),
\]
which proves \eqref{eq:pointwise-slice}.
\end{proof}

\begin{corollary}[Integrated slice identity]
\label{cor:integrated-slice}
For every configuration \(\omega\),
\[
J_+(\omega)+J_-(\omega)
=
\int_{S^1}\sum_{\gamma\in\cL(\omega)} e_G(S_\gamma(s))\,\dd s.
\]
\end{corollary}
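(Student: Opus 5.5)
The plan is to integrate the pointwise identity of Lemma~\ref{lem:pointwise-slice} over $s\in S^1$. By definition,
\[
J_+(\omega)+J_-(\omega)=\sum_{e\in E}\int_{S^1}\bigl(I_+(\omega;e,s)+I_-(\omega;e,s)\bigr)\,\dd s .
\]
Since $E$ is finite, we can exchange the finite sum and the integral to write this as
\[
\int_{S^1}\sum_{e\in E}\bigl(I_+(\omega;e,s)+I_-(\omega;e,s)\bigr)\,\dd s .
\]
For every regular time $s\in S^1\setminus T(\omega)$, identity \eqref{eq:same-split} replaces $I_++I_-$ by $I_{\mathrm{same}}$ edgewise. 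Then \eqref{eq:pointwise-slice} turns $\sum_{e}I_{\mathrm{same}}(\omega;e,s)$ into $\sum_{\gamma\in\cL(\omega)}e_G(S_\gamma(s))$. So the two integrands agree for all regular $s$.

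The only point that needs care is the exceptional set $T(\omega)$. There we have set the indicators to $0$, while the right-hand integrand $\sum_\gamma e_G(S_\gamma(s))$ may be defined differently, or even ambiguously, because a slice at a mark time can meet a loop at a jump. However, $T(\omega)$ is finite because $\omega$ has finitely many atoms. It therefore has Lebesgue measure zero, and changing the integrand there does not affect the integral.

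It remains to record measurability and integrability. The map $s\mapsto\sum_\gamma e_G(S_\gamma(s))$ is bounded by $|E|$. Since the slice sets $S_\gamma(s)$ do not change between consecutive mark times, it is also piecewise constant on the finitely many intervals of $S^1\setminus T(\omega)$. Hence both integrals are well defined and finite, and the equality follows.

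I do not expect a genuine obstacle here; the only thing to get right is handling the convention on the null set $T(\omega)$.
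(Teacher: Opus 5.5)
Your proposal is correct and matches the paper's proof: apply Lemma~\ref{lem:pointwise-slice} at every regular time, note that the finite set $T(\omega)$ is Lebesgue-null, and integrate over $S^1$ using the definitions of $J_\pm$. Your extra remarks on measurability and piecewise constancy of the integrand are a harmless addition that the paper leaves implicit.
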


\begin{proof}
By Lemma~\ref{lem:pointwise-slice}, for every regular time
\(s\in S^1\setminus T(\omega)\),
\[
\sum_{e\in E}\bigl(I_+(\omega;e,s)+I_-(\omega;e,s)\bigr)
=
\sum_{\gamma\in\cL(\omega)} e_G(S_\gamma(s)).
\]
Since \(T(\omega)\) is finite, the same identity holds for almost every \(s\in S^1\).
Integrating over \(s\) and using the definitions of \(J_+\) and \(J_-\), we obtain
\[
J_+(\omega)+J_-(\omega)
=
\int_{S^1}\sum_{\gamma\in\cL(\omega)} e_G(S_\gamma(s))\,\dd s.
\]
\end{proof}

\begin{lemma}
\label{lem:slice}
Assume that \eqref{eq:A1} holds for some \(\eta,\varepsilon>0\). Then on the event
\(A_\eta^c\),
\[
J_+(\omega)+J_-(\omega)\le (1+\varepsilon)n.
\]
\end{lemma}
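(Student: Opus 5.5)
The plan is to start from Corollary~\ref{cor:integrated-slice}, which writes
\[
J_+(\omega)+J_-(\omega)=\int_{S^1}\sum_{\gamma\in\cL(\omega)} e_G(S_\gamma(s))\,\dd s ,
\]
and to bound the integrand pointwise for every regular time \(s\). By the integrated identity, it suffices to show that for every \(s\in S^1\setminus T(\omega)\), on \(A_\eta^c\),
\[
\sum_{\gamma\in\cL(\omega)} e_G(S_\gamma(s))\le (1+\varepsilon)n .
\]
Integrating over \(S^1\), which has total length \(1\), then gives the claim. The irregular times form a finite set and do not contribute.

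For the pointwise bound, fix a regular time \(s\) and a loop \(\gamma\). The slice \(S_\gamma(s)\) is contained in the vertex support, that is, \(S_\gamma(s)\subseteq \operatorname{supp}_V(\gamma)\). On \(A_\eta^c\) every loop satisfies \(|\operatorname{supp}_V(\gamma)|\le \eta n\). Hence \(|S_\gamma(s)|\le \eta n\), and the small-set sparsity condition \eqref{eq:A1} applies to give
\[
e_G(S_\gamma(s))\le (1+\varepsilon)|S_\gamma(s)| .
\]

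Next, sum this over all loops. The loops partition \(V\times S^1\), so at the fixed height \(s\) the slices \(S_\gamma(s)\), \(\gamma\in\cL(\omega)\), are pairwise disjoint and their union is \(V\). Therefore
\[
\sum_\gamma |S_\gamma(s)|=n, \qquad\text{so}\qquad \sum_\gamma e_G(S_\gamma(s))\le(1+\varepsilon)n .
\]

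No step is a real obstacle. The only point needing care is that the slice lies inside the support, so that the size restriction in \eqref{eq:A1} is met. The fact that the slices partition \(V\) was already used in the proof of Lemma~\ref{lem:pointwise-slice}.
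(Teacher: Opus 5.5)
Your proposal is correct and follows essentially the same route as the paper: you bound each $e_G(S_\gamma(s))$ by $(1+\varepsilon)|S_\gamma(s)|$ using $S_\gamma(s)\subseteq\operatorname{supp}_V(\gamma)$ together with (A1), sum over loops using that the slices partition $V$, and integrate via Corollary~\ref{cor:integrated-slice}. The only difference is cosmetic: you bound the integrand pointwise in $s$ before integrating, whereas the paper applies the bound inside the integral, and this makes no difference.
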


\begin{proof}
On \(A_\eta^c\), every loop \(\gamma\in\cL(\omega)\) satisfies
\[
|\operatorname{supp}_V(\gamma)|\le \eta n.
\]
Hence, for every \(s\in S^1\),
\[
|S_\gamma(s)|\le |\operatorname{supp}_V(\gamma)|\le \eta n.
\]
By \eqref{eq:A1},
\[
e_G(S_\gamma(s))\le (1+\varepsilon)|S_\gamma(s)|.
\]
Using Corollary~\ref{cor:integrated-slice}, we get
\[
J_+ + J_-
=
\int_{S^1}\sum_{\gamma\in\cL(\omega)} e_G(S_\gamma(s))\,\dd s
\le
(1+\varepsilon)\int_{S^1}\sum_{\gamma\in\cL(\omega)} |S_\gamma(s)|\,\dd s.
\]
For each fixed \(s\), the sets \(S_\gamma(s)\), \(\gamma\in\cL(\omega)\), partition \(V\),
and therefore
\[
\sum_{\gamma\in\cL(\omega)} |S_\gamma(s)|=n.
\]
Thus
\[
J_+ + J_-
\le
(1+\varepsilon)\int_{S^1} n\,\dd s
=
(1+\varepsilon)n.
\]
\end{proof}

\begin{proof}[Proof of Proposition~\ref{prop:deterministic-drift}]
By Proposition~\ref{prop:exact-drift},
\[
D_{\theta,u}(t)
=
\Ebb_{\theta,t,u}^G\!\left[
(1+\theta u)J_+ + (1+\theta(1-u))J_- - |E|
\right].
\]
Since
\[
(1+\theta u)J_+ + (1+\theta(1-u))J_-
\le
c_{\theta,u}(J_+ + J_-),
\]
where
\[
c_{\theta,u}=1+\theta\max\{u,1-u\},
\]
we obtain
\[
D_{\theta,u}(t)
\le
\Ebb_{\theta,t,u}^G\!\left[c_{\theta,u}(J_+ + J_-) - |E|\right].
\]
Split according to the event \(A_\eta\):
\[
D_{\theta,u}(t)
\le
\Ebb_{\theta,t,u}^G\!\left[(c_{\theta,u}(J_+ + J_-) - |E|)\1_{A_\eta^c}\right]
+
\Ebb_{\theta,t,u}^G\!\left[(c_{\theta,u}(J_+ + J_-) - |E|)\1_{A_\eta}\right].
\]
On \(A_\eta^c\), Lemma~\ref{lem:slice} gives
\[
J_+ + J_- \le (1+\varepsilon)n.
\]
On the other hand, always
\[
J_+ + J_- \le |E|,
\]
since for each edge \(e\in E\), the time-circle has length \(1\). Hence
\[
D_{\theta,u}(t)
\le
\bigl(c_{\theta,u}(1+\varepsilon)n-|E|\bigr)\Pbb_{\theta,t,u}^G(A_\eta^c)
+
\bigl(c_{\theta,u}|E|-|E|\bigr)\Pbb_{\theta,t,u}^G(A_\eta).
\]
Rearranging,
\[
D_{\theta,u}(t)
\le
\bigl(c_{\theta,u}(1+\varepsilon)n-|E|\bigr)
+
c_{\theta,u}\bigl(|E|-(1+\varepsilon)n\bigr)\Pbb_{\theta,t,u}^G(A_\eta).
\]
Since \(c_{\theta,u}\le 1+\theta\), we also have
\[
c_{\theta,u}\bigl(|E|-(1+\varepsilon)n\bigr)
\le
(1+\theta)|E|-c_{\theta,u}(1+\varepsilon)n,
\]
and \eqref{eq:deterministic-drift} follows.
\end{proof}

\section{Verification of the small-set sparsity condition}

In this section we verify that the small-set sparsity assumption
\begin{equation}
e_G(S)\le (1+\varepsilon)|S|
\qquad
\text{for every }S\subset V(G)\text{ with }|S|\le \eta |V(G)|
\tag{A1}
\end{equation}
holds with high probability for the random graph models considered in this paper. For \(\eta,\varepsilon>0\), define the event
\[
\mathcal E_n(\eta,\varepsilon)
:=
\Bigl\{
\forall S\subset V(G_n)\text{ with }|S|\le \eta n:\ 
e_{G_n}(S)\le (1+\varepsilon)|S|
\Bigr\}.
\]

\subsection{Sparse Erd\H{o}s--R\'enyi graphs}

\begin{lemma}
\label{lem:A1-ER}
Fix \(\lambda>0\) and \(\varepsilon>0\). Let \(G_n\sim G(n,\lambda/n)\). Then there exists \(\eta=\eta(\lambda,\varepsilon)>0\) such that
\[
\Pbb\bigl(\mathcal E_n(\eta,\varepsilon)\bigr)\xrightarrow[n\to\infty]{}1.
\]
\end{lemma}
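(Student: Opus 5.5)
We prove the lemma with a first-moment (union bound) argument over small vertex sets. Fix \(\varepsilon>0\), and for \(1\le k\le \eta n\) let \(m_k:=\lceil(1+\varepsilon)k\rceil\); if \(e_{G_n}(S)>(1+\varepsilon)|S|\) then \(e_{G_n}(S)\ge m_k\) (for \(k\) small, \(m_k\) may exceed \(\binom k2\), in which case the event is empty). The failure event \(\mathcal E_n(\eta,\varepsilon)^c\) is contained in the union over \(k\le\eta n\) and \(S\) with \(|S|=k\) of \(\{e_{G_n}(S)\ge m_k\}\). Since \(e_{G_n}(S)\sim\mathrm{Bin}(\binom k2,\lambda/n)\), the union bound gives
\[
\Pbb\bigl(\mathcal E_n(\eta,\varepsilon)^c\bigr)\le \sum_{k=1}^{\lfloor\eta n\rfloor}\binom nk\binom{\binom k2}{m_k}\Bigl(\frac\lambda n\Bigr)^{m_k}
\le \sum_{k}\Bigl(\frac{en}{k}\Bigr)^k\Bigl(\frac{ek^2\lambda}{2m_kn}\Bigr)^{m_k}.
\]
Using \(m_k\ge(1+\varepsilon)k\) and \(m_k\ge k\), the second factor is at most \(\bigl(\tfrac{e\lambda k}{2n}\bigr)^{m_k}\), and since \(k/n\le\eta\) is small this base is below \(1\), so we may lower the exponent to \((1+\varepsilon)k\). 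Each summand is then bounded by
\[
\Bigl[\frac{en}{k}\Bigl(\frac{e\lambda k}{2n}\Bigr)^{1+\varepsilon}\Bigr]^k
=\Bigl[C_\lambda\Bigl(\frac kn\Bigr)^{\varepsilon}\Bigr]^k,\qquad C_\lambda:=e\,(e\lambda/2)^{1+\varepsilon}.
\]

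The sum is then split into two ranges. For \(\sqrt n\le k\le\eta n\), choose \(\eta\) with \(C_\lambda\eta^\varepsilon\le 1/2\), so each term is at most \(2^{-k}\) and this range contributes \(O(2^{-\sqrt n})\). For \(1\le k\le\sqrt n\), the base is at most \(C_\lambda n^{-\varepsilon/2}\), so the contribution is \(\sum_{k\ge1}(C_\lambda n^{-\varepsilon/2})^k=O(n^{-\varepsilon/2})\to0\). For very small \(k\) the events are in any case empty because \(\binom k2<m_k\), so no separate care is needed there. Hence \(\Pbb(\mathcal E_n(\eta,\varepsilon))\to1\) with \(\eta=\eta(\lambda,\varepsilon)\) as chosen.

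The main point to handle carefully is the small-\(k\) regime. The naive bound there is only polynomially small in \(n\), and it is the extra \(\varepsilon\) in the exponent, i.e.\ the factor \((k/n)^{\varepsilon k}\), that makes the total sum vanish. Without the \(\varepsilon\), a bound of the form \(e(S)\le|S|\) would fail, since unicyclic components exist with probability bounded away from zero. We must also check that replacing the exponent \(m_k\) by \((1+\varepsilon)k\) is legitimate, which holds because the base \(e\lambda k/(2n)<1\) once \(\eta<2/(e\lambda)\); this is absorbed into the choice of \(\eta\).
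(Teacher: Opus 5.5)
Your argument is correct and is essentially the paper's proof: the binomial tail bound $\binom{N}{m}p^m\le (eNp/m)^m$, the union bound with $\binom nk\le (en/k)^k$ giving $\bigl[C(k/n)^\varepsilon\bigr]^k$, and a two-range split. You cut at $\sqrt n$ instead of $\log n$, and your explicit use of $m_k=\lceil(1+\varepsilon)k\rceil$, together with checking that the base is below $1$, is if anything slightly more careful than the paper's use of a non-integer $m$. One aside in your last paragraph is wrong, though it does not affect the proof: unicyclic components have $e(S)=|S|$, so they do not violate $e(S)\le|S|$; the relevant obstructions would be small subgraphs containing two or more cycles.
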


\begin{proof}
Fix \(S\subset[n]\) with \(|S|=k\). Then
\[
e_{G_n}(S)\sim \mathrm{Bin}\!\Bigl(\binom{k}{2},\frac{\lambda}{n}\Bigr).
\]
Set \(m:=(1+\varepsilon)k\). By the standard binomial tail bound
\[
\Pbb(\mathrm{Bin}(N,p)\ge m)\le \Bigl(\frac{eNp}{m}\Bigr)^m,
\]
we obtain
\[
\Pbb\bigl(e_{G_n}(S)\ge (1+\varepsilon)k\bigr)
\le
\left(
\frac{e\binom{k}{2}(\lambda/n)}{(1+\varepsilon)k}
\right)^{(1+\varepsilon)k}
\le
\left(
\frac{e\lambda}{2(1+\varepsilon)}\frac{k}{n}
\right)^{(1+\varepsilon)k}.
\]
Hence, by a union bound over all subsets \(S\subset[n]\) of size \(k\),
\[
\Pbb\Bigl(\exists S\subset[n],\ |S|=k:\ e_{G_n}(S)\ge (1+\varepsilon)k\Bigr)
\le
\binom{n}{k}
\left(
\frac{e\lambda}{2(1+\varepsilon)}\frac{k}{n}
\right)^{(1+\varepsilon)k}.
\]
Using \(\binom{n}{k}\le (en/k)^k\), this yields
\[
\Pbb\Bigl(\exists S\subset[n],\ |S|=k:\ e_{G_n}(S)\ge (1+\varepsilon)k\Bigr)
\le
\left(
C_{\lambda,\varepsilon}\Bigl(\frac{k}{n}\Bigr)^\varepsilon
\right)^k
\]
for a finite constant \(C_{\lambda,\varepsilon}>0\).
We now split the sum over \(k\) into two ranges.
If \(1\le k\le \log n\), then
\[
\left(
C_{\lambda,\varepsilon}\Bigl(\frac{k}{n}\Bigr)^\varepsilon
\right)^k
\le
\left(
C_{\lambda,\varepsilon}\Bigl(\frac{\log n}{n}\Bigr)^\varepsilon
\right)^k,
\]
and therefore
\[
\sum_{k=1}^{\lfloor \log n\rfloor}
\Pbb\Bigl(\exists S,\ |S|=k:\ e_{G_n}(S)\ge (1+\varepsilon)k\Bigr)
\xrightarrow[n\to\infty]{}0.
\]
Next choose \(\eta>0\) so small that
\[
C_{\lambda,\varepsilon}\eta^\varepsilon<e^{-2}.
\]
Then for every \(k\) with \(\log n\le k\le \eta n\),
\[
\left(
C_{\lambda,\varepsilon}\Bigl(\frac{k}{n}\Bigr)^\varepsilon
\right)^k
\le
\left(C_{\lambda,\varepsilon}\eta^\varepsilon\right)^k
\le e^{-2k}.
\]
Hence
\[
\sum_{k=\lceil \log n\rceil}^{\lfloor \eta n\rfloor}
\Pbb\Bigl(\exists S,\ |S|=k:\ e_{G_n}(S)\ge (1+\varepsilon)k\Bigr)
\le
\sum_{k=\lceil \log n\rceil}^{\infty} e^{-2k}
\xrightarrow[n\to\infty]{}0.
\]
Combining the two ranges proves that $\Pbb\bigl(\mathcal E_n(\eta,\varepsilon)^c\bigr)\to 0$.
\end{proof}

\subsection{Simple bounded-degree configuration model}

Let \(\widetilde G_n\) denote the multigraph given by the pairing construction with degree sequence \((d_i^{(n)})_{i=1}^n\), and let \(G_n\) denote the corresponding simple graph conditioned on the event that \(\widetilde G_n\) is simple.

\begin{lemma}
\label{lem:A1-CM-pairing}
Assume that
\[
1\le d_i^{(n)}\le \Delta
\qquad\text{for all }i,n,
\]
and let \(\widetilde G_n\) be the multigraph generated by the pairing model with degree sequence \((d_i^{(n)})_{i=1}^n\). Then for every \(\varepsilon>0\) there exists \(\eta=\eta(\Delta,\varepsilon)>0\) such that
\[
\Pbb\Bigl(
\forall S\subset[n]\text{ with }|S|\le \eta n:\ 
e_{\widetilde G_n}(S)\le (1+\varepsilon)|S|
\Bigr)\xrightarrow[n\to\infty]{}1.
\]
\end{lemma}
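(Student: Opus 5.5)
We follow the proof of Lemma~\ref{lem:A1-ER}: a first-moment bound for a fixed vertex set, a union bound over sets, and a split of the range of $k=|S|$. Write $M:=\sum_i d_i^{(n)}$ for the total number of half-edges; then $n\le M\le \Delta n$. Set $m:=\lceil(1+\varepsilon)k\rceil$. If $e_{\widetilde G_n}(S)\ge m$ (edges counted with multiplicity, self-loops included), then among the at most $\Delta k$ half-edges attached to $S$ there are $m$ disjoint pairs that are matched to each other in the pairing.

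\textbf{Step 1: fix $S$ and $m$ specific pairs.} A uniform pairing contains $m$ prescribed disjoint pairs with probability
\[
\prod_{j=0}^{m-1}\frac{1}{M-2j-1}\le \Bigl(\frac{1}{M-2m}\Bigr)^m\le\Bigl(\frac{2}{n}\Bigr)^m,
\]
which holds once $2m\le M/2$; this is guaranteed by taking $\eta$ small, for instance $(1+\varepsilon)\eta+n^{-1}\le 1/4$.

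\textbf{Step 2: count the choices of $m$ pairs.} The number of ways to choose $m$ disjoint pairs from at most $\Delta k$ half-edges is at most
\[
\binom{\Delta k}{2m}(2m)!/(2^m m!)\le (\Delta k)^{2m}/m!\le \bigl(e\Delta^2k^2/m\bigr)^m\le (e\Delta^2 k)^m .
\]
Combining with Step~1, for fixed $|S|=k$,
\[
\Pbb\bigl(e_{\widetilde G_n}(S)\ge m\bigr)\le \bigl(2e\Delta^2 k/n\bigr)^m .
\]

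\textbf{Step 3: union bound over sets.} Summing over the at most $\binom nk\le (en/k)^k$ sets of size $k$, and using $m\ge(1+\varepsilon)k$ together with $2e\Delta^2k/n<1$ for small $\eta$, the probability that some set of size $k$ fails is at most
\[
(en/k)^k\,(C_\Delta k/n)^{(1+\varepsilon)k}\le \bigl(C'_{\Delta,\varepsilon}(k/n)^{\varepsilon}\bigr)^k .
\]
This has exactly the form obtained in the Erd\H{o}s--R\'enyi proof.

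\textbf{Step 4: sum over $k$.} We split the sum at $\log n$, as before. For $1\le k\le\log n$, each term is at most $\bigl(C'(\log n/n)^\varepsilon\bigr)^k$, and the geometric sum tends to $0$. For $\log n\le k\le\eta n$, choose $\eta$ with $C'\eta^\varepsilon<e^{-2}$; the sum is then bounded by $\sum_{k\ge\log n}e^{-2k}\to0$.

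\textbf{Main obstacle.} The delicate point is Step~2: the $m$ internal edges must be encoded as disjoint half-edge pairs without overcounting. The bound must produce the factor $k^{m}$, not $k^{2m}$, so that the $n^{-m}$ from Step~1 leaves the net factor $(k/n)^{\varepsilon k}$. The estimate $(\Delta k)^{2m}/m!\le(e\Delta^2k^2/m)^m$ together with $m\ge k$ is what achieves this. Bounded degree is essential here; it is also what keeps $M$ comparable to $n$ in Step~1.
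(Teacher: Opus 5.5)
Your proposal is correct and follows essentially the same route as the paper. Both arguments obtain a first-moment bound for fixed $S$ by choosing $2m$ of the at most $\Delta k$ half-edges at $S$ together with an internal pairing of them, bound the pairing probability by $(M-2m)^{-m}\le (2/n)^m$, and use $m!\ge (m/e)^m$ to cancel one factor $k^m$; they then take a union bound with $\binom nk\le (en/k)^k$ and split the sum over $k$ at $\log n$. Your explicit check that $2e\Delta^2k/n<1$, which is needed to pass from exponent $m$ to $(1+\varepsilon)k$, is a detail the paper leaves implicit.
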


\begin{proof}
Write $M_n:=\sum_{i=1}^n d_i^{(n)}$.
Since \(d_i^{(n)}\ge 1\) for all \(i\), we have \(M_n\ge n\). Fix a set \(S\subset[n]\) with \(|S|=k\), and let $D_S:=\sum_{i\in S} d_i^{(n)}$.
By the bounded-degree assumption, $D_S\le \Delta k$. Set $m:=\lceil (1+\varepsilon)k\rceil$.
If \(e_{\widetilde G_n}(S)\ge (1+\varepsilon)k\), then necessarily
\[
e_{\widetilde G_n}(S)\ge m.
\]
We therefore estimate the probability that there are at least \(m\) edges of \(\widetilde G_n\) with both endpoints in \(S\).
In the pairing model, the total number of pairings of the \(M_n\) half-edges is $(M_n-1)!!$.
To obtain a pairing with at least \(m\) internal edges inside \(S\), it is enough to choose \(2m\) half-edges among the \(D_S\) half-edges incident to \(S\), pair them internally, and then pair the remaining \(M_n-2m\) half-edges arbitrarily. Hence
\[
\Pbb\bigl(e_{\widetilde G_n}(S)\ge m\bigr)
\le
\frac{\binom{D_S}{2m}(2m-1)!!(M_n-2m-1)!!}{(M_n-1)!!}.
\]
Using
\[
\frac{(M_n-2m-1)!!}{(M_n-1)!!}
=
\frac{1}{(M_n-1)(M_n-3)\cdots(M_n-2m+1)}
\le
\frac{1}{(M_n-2m)^m},
\]
and
\[
\binom{D_S}{2m}(2m-1)!!
=
\frac{D_S!}{(D_S-2m)!}\frac{1}{2^m m!}
\le
\frac{D_S^{2m}}{2^m m!},
\]
we obtain
\[
\Pbb\bigl(e_{\widetilde G_n}(S)\ge m\bigr)
\le
\frac{1}{m!}
\left(\frac{D_S^2}{2(M_n-2m)}\right)^m.
\]
Now \(m!\ge (m/e)^m\), so
\[
\Pbb\bigl(e_{\widetilde G_n}(S)\ge m\bigr)
\le
\left(
\frac{eD_S^2}{2m(M_n-2m)}
\right)^m.
\]

We now restrict to \(k\le \eta n\), where \(\eta>0\) will be chosen small. Since
\[
m=\lceil(1+\varepsilon)k\rceil\le (2+\varepsilon)k,
\]
if \(\eta\) is so small that
\[
2(2+\varepsilon)\eta\le \frac12,
\]
then for all \(k\le \eta n\),
\[
M_n-2m\ge n-2(2+\varepsilon)k\ge \frac n2.
\]
Using also \(D_S\le \Delta k\) and \(m\ge (1+\varepsilon)k\), we get
\[
\Pbb\bigl(e_{\widetilde G_n}(S)\ge (1+\varepsilon)k\bigr)
\le
\Pbb\bigl(e_{\widetilde G_n}(S)\ge m\bigr)
\le
\left(
\frac{e\Delta^2}{1+\varepsilon}\frac{k}{n}
\right)^m
\le
\left(
C_{\Delta,\varepsilon}\frac{k}{n}
\right)^{(1+\varepsilon)k}
\]
for a finite constant \(C_{\Delta,\varepsilon}>0\).

Taking a union bound over all \(S\subset[n]\) with \(|S|=k\), and using
\[
\binom{n}{k}\le \left(\frac{en}{k}\right)^k,
\]
we obtain
\[
\Pbb\Bigl(
\exists S\subset[n],\ |S|=k:\ e_{\widetilde G_n}(S)\ge (1+\varepsilon)k
\Bigr)
\le
\left(
\widetilde C_{\Delta,\varepsilon}\Bigl(\frac{k}{n}\Bigr)^\varepsilon
\right)^k
\]
for another finite constant \(\widetilde C_{\Delta,\varepsilon}>0\).

Now split the sum over \(k\) into the ranges \(1\le k\le \log n\) and \(\log n\le k\le \eta n\).

For \(1\le k\le \log n\),
\[
\left(
\widetilde C_{\Delta,\varepsilon}\Bigl(\frac{k}{n}\Bigr)^\varepsilon
\right)^k
\le
\left(
\widetilde C_{\Delta,\varepsilon}\Bigl(\frac{\log n}{n}\Bigr)^\varepsilon
\right)^k,
\]
hence
\[
\sum_{k=1}^{\lfloor\log n\rfloor}
\Pbb\Bigl(
\exists S,\ |S|=k:\ e_{\widetilde G_n}(S)\ge (1+\varepsilon)k
\Bigr)\xrightarrow[n\to\infty]{}0.
\]
Choose \(\eta>0\) even smaller, if necessary, so that
\[
\widetilde C_{\Delta,\varepsilon}\eta^\varepsilon<e^{-2}.
\]
Then for \(\log n\le k\le \eta n\),
\[
\left(
\widetilde C_{\Delta,\varepsilon}\Bigl(\frac{k}{n}\Bigr)^\varepsilon
\right)^k
\le
\left(\widetilde C_{\Delta,\varepsilon}\eta^\varepsilon\right)^k
\le e^{-2k},
\]
and therefore
\[
\sum_{k=\lceil\log n\rceil}^{\lfloor\eta n\rfloor}
\Pbb\Bigl(
\exists S,\ |S|=k:\ e_{\widetilde G_n}(S)\ge (1+\varepsilon)k
\Bigr)
\le
\sum_{k=\lceil\log n\rceil}^{\infty} e^{-2k}
\xrightarrow[n\to\infty]{}0.
\]

Combining the two ranges proves the claim.
\end{proof}

\begin{corollary}
\label{cor:A1-CM}
Assume that
\[
1\le d_i^{(n)}\le \Delta
\qquad\text{for all }i,n,
\]
and let \(G_n\) be the simple graph obtained from the configuration model with degree sequence \((d_i^{(n)})_{i=1}^n\), that is, the pairing model conditioned on simplicity. Then for every \(\varepsilon>0\) there exists \(\eta=\eta(\Delta,\varepsilon)>0\) such that
\[
\Pbb\Bigl(
\forall S\subset[n]\text{ with }|S|\le \eta n:\ 
e_{G_n}(S)\le (1+\varepsilon)|S|
\Bigr)\xrightarrow[n\to\infty]{}1.
\]
\end{corollary}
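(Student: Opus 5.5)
The plan is to transfer Lemma~\ref{lem:A1-CM-pairing} from the multigraph \(\widetilde G_n\) to the simple graph \(G_n\) by conditioning. Write \(\mathcal B_n\) for the event that some \(S\subset[n]\) with \(|S|\le\eta n\) has \(e_{\widetilde G_n}(S)>(1+\varepsilon)|S|\). Since \(G_n\) has the law of \(\widetilde G_n\) conditioned on the event \(\mathrm{Simple}_n\) that \(\widetilde G_n\) is simple, and on that event the two graphs coincide, we have
\[
\Pbb\bigl(\mathcal E_n(\eta,\varepsilon)^c\bigr)
=\Pbb\bigl(\mathcal B_n\,\big|\,\mathrm{Simple}_n\bigr)
\le\frac{\Pbb(\mathcal B_n)}{\Pbb(\mathrm{Simple}_n)}.
\]
By Lemma~\ref{lem:A1-CM-pairing}, with \(\eta=\eta(\Delta,\varepsilon)\), the numerator tends to \(0\). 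So it suffices to show that \(\liminf_n \Pbb(\mathrm{Simple}_n)>0\).

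For the lower bound on the simplicity probability I would cite the standard result for bounded-degree configuration models (Bollob\'as; McKay; see also van der Hofstad's book). If \(\max_i d_i^{(n)}\le\Delta\) and \(M_n\to\infty\), which holds here because \(M_n\ge n\), then the numbers of self-loops and of multiple edges are asymptotically jointly Poisson. Their means are \(\nu_n/2\) and \(\nu_n^2/4\), where \(\nu_n=\sum_i d_i(d_i-1)/M_n\le\Delta-1\). This gives
\[
\Pbb(\mathrm{Simple}_n)=\exp\bigl(-\nu_n/2-\nu_n^2/4\bigr)+o(1)\ge e^{-(\Delta-1)/2-(\Delta-1)^2/4}-o(1),
\]
which is bounded away from \(0\).

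I expect the main obstacle to be this simplicity bound: the Poisson limit is classical, but the paper has not proved it. A self-contained fallback is only partial. By a first-moment computation in the same style as Lemma~\ref{lem:A1-CM-pairing}, the expected numbers of loops and double edges are bounded by constants depending on \(\Delta\). That controls the expected defects but does not by itself give \(\Pbb(\mathrm{Simple}_n)\) bounded away from \(0\), so I would cite the standard result.

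If one prefers to avoid the sharp asymptotics, an alternative is to make the numerator decay fast enough to beat any lower bound on \(\Pbb(\mathrm{Simple}_n)\). In the pairing lemma, the \(k\le\log n\) range already contributes \(O(n^{-\varepsilon})\), and the \(k\ge\log n\) range contributes \(O(n^{-2})\). A crude lower bound \(\Pbb(\mathrm{Simple}_n)\ge n^{-o(1)}\) would therefore suffice, but proving even that requires a switching argument. So I would simply invoke the classical bounded-degree result and conclude.
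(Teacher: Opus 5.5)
Your proposal is correct and follows the paper's proof essentially verbatim. You bound the conditional probability of the bad event by \(\Pbb(\mathcal B_n)/\Pbb(\mathrm{Simple}_n)\), control the numerator with Lemma~\ref{lem:A1-CM-pairing}, and cite the classical fact that the bounded-degree pairing model is simple with probability bounded away from \(0\). The paper cites Janson's Theorem~1.1 for that last fact; your Poisson-limit formula \(e^{-\nu_n/2-\nu_n^2/4}+o(1)\), with \(\nu_n\le\Delta-1\), is the same input stated explicitly.
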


\begin{proof}
Let \(\mathcal E_n(\eta,\varepsilon)\) denote the event from the statement. By Lemma~\ref{lem:A1-CM-pairing}, for every \(\varepsilon>0\) there exists \(\eta>0\) such that
\[
\Pbb\bigl(\mathcal E_n(\eta,\varepsilon)\bigr)\to 1
\]
in the pairing model.

Moreover, for bounded degree sequences, the probability that the pairing model is simple is bounded away from \(0\) uniformly in \(n\); see for instance \cite[Theorem 1.1]{Janson2009}. Therefore
\[
\Pbb\bigl(\mathcal E_n(\eta,\varepsilon)^c \mid \widetilde G_n \text{ simple}\bigr)
\le
\frac{\Pbb(\mathcal E_n(\eta,\varepsilon)^c)}{\Pbb(\widetilde G_n \text{ simple})}
\xrightarrow[n\to\infty]{}0.
\]
Since conditioning on simplicity gives exactly the law of \(G_n\), the claim follows.
\end{proof}

\subsection{Random regular graphs}

\begin{corollary}
\label{cor:A1-RRG}
Fix \(d\ge 3\), and let \(G_n\) be the random \(d\)-regular graph on \(n\) vertices. Then for every \(\varepsilon>0\) there exists \(\eta=\eta(d,\varepsilon)>0\) such that
\[
p_{d,n}\bigl(\mathcal E_n(\eta,\varepsilon)\bigr)\xrightarrow[n\to\infty]{}1.
\]
\end{corollary}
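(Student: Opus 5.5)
The plan is to deduce this directly from Corollary~\ref{cor:A1-CM}, since the random \(d\)-regular graph is a special case of the simple configuration model. Take the constant degree sequence \(d_i^{(n)}=d\) for all \(i\). This sequence has even total degree \(nd\) exactly when the model \(p_{d,n}\) is defined, so we restrict throughout to such \(n\). It also satisfies \(1\le d_i^{(n)}\le \Delta\) with \(\Delta:=d\).

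The key step is to identify laws. Let \(\widetilde G_n\) be the pairing-model multigraph with this sequence. Every simple \(d\)-regular graph on \([n]\) arises from the same number \((d!)^n\) of pairings, namely the relabelings of half-edges at each vertex, and these pairings are distinct. Hence the pairing model conditioned on simplicity is uniform on simple \(d\)-regular graphs on \([n]\), which is exactly \(p_{d,n}\).

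Given this identification, Corollary~\ref{cor:A1-CM} applied with \(\Delta=d\) yields, for every \(\varepsilon>0\), some \(\eta=\eta(d,\varepsilon)>0\) with \(p_{d,n}(\mathcal E_n(\eta,\varepsilon))\to 1\) along the admissible \(n\). The uniform lower bound on \(\Pbb(\widetilde G_n\text{ simple})\) used inside that corollary is classical in the regular case, where it tends to \(e^{-(d^2-1)/4}>0\).

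The only point needing care is the uniformity of the multiplicity count \((d!)^n\). It holds because a simple graph has no loops or multi-edges, so distinct half-edge relabelings give distinct pairings and no symmetry factors arise. Everything else is a direct invocation of the earlier results, so I do not expect a genuine obstacle.
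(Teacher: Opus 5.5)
Your proposal is correct and matches the paper's proof, which applies Corollary~\ref{cor:A1-CM} to the constant degree sequence $d_i^{(n)}\equiv d$. Your check that the pairing model conditioned on simplicity is uniform on simple $d$-regular graphs (each arises from exactly $(d!)^n$ pairings) fills in the law-identification step that the paper leaves implicit.
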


\begin{proof}
Apply Corollary~\ref{cor:A1-CM} to the constant degree sequence \(d_i^{(n)}\equiv d\).
\end{proof}

\section{Proofs of the main theorems}

\subsection{An averaged lower bound}
The following proposition converts the deterministic drift bound into an averaged lower bound on the probability of a macroscopic loop under small-set sparsity and edge-density control. This abstract criterion will later allow us to deduce the averaged macroscopic-loop bounds for the random graph ensembles considered in this paper by verifying these two inputs with high probability.
\begin{proposition}
\label{prop:abstract-averaged}
Fix \(u\in[0,1]\) and \(\theta>0\). Define
\[
C_\theta:=
\begin{cases}
1, & \theta=1,\\[1ex]
\dfrac{\theta |\log \theta|}{|\theta-1|}, & \theta\neq 1.
\end{cases}
\]

Let \(G=(V,E)\) be a finite graph with \(|V|=n\), and assume that for some \(\eta,\varepsilon>0\),
\[
e_G(S)\le (1+\varepsilon)|S|
\qquad\text{for every }S\subset V\text{ with }|S|\le \eta n.
\]
Assume moreover that
\[
\frac{|E|}{n}\in [m_-,m_+]
\]
for some \(m_-,m_+>0\).
Then for every \(a\ge 0\) and every \(s>0\),
\[
\frac1s\int_a^{a+s}\Pbb^G_{\theta,t,u}(A_\eta)\,\dd t
\ge
\frac{
m_- - c_{\theta,u}(1+\varepsilon)-C_\theta/s
}{
(1+\theta)m_+ - c_{\theta,u}(1+\varepsilon)
},
\]
provided the denominator is positive.
\end{proposition}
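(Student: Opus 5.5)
The plan is to integrate the deterministic drift bound of Proposition~\ref{prop:deterministic-drift} over \([a,a+s]\). The left-hand side integrates to an increment of \(\frac{\theta}{\theta-1}\log Z_G\) (or of \(\Ebb_{1,t,u}^G[\lambda]\) when \(\theta=1\)), and this increment we bound by a constant times \(n\). Dividing by \(n\), and using \(|E|/n\in[m_-,m_+]\), should then give the claimed inequality.

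\textbf{Step 1: bounding the increment.} For \(\theta=1\) we have \(1\le \lambda(\omega)\le n\), so
\[
\Bigl|\int_a^{a+s}D_{1,u}(t)\,\dd t\Bigr|
=\bigl|\Ebb_{1,a+s,u}^G[\lambda]-\Ebb_{1,a,u}^G[\lambda]\bigr|\le n .
\]
For \(\theta\neq1\) we have \(\theta^{\lambda}\) between \(\theta\) and \(\theta^{n}\) (in whichever order), and \(\rho^G_{t,u}\) is a probability measure. Hence \(\log Z_G(\theta,t,u)\) lies in the interval with endpoints \(\log\theta\) and \(n\log\theta\). Its increment over any interval is therefore at most \((n-1)|\log\theta|\le n|\log\theta|\) in absolute value. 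Multiplying by \(\theta/(\theta-1)\) gives
\[
\int_a^{a+s}D_{\theta,u}(t)\,\dd t\ \ge\ -\,n\,C_\theta .
\]
This lower bound is the direction we need in both cases. Lemma~\ref{lem:poisson-diff} gives differentiability of the relevant functions for \(t>0\); together with continuity at \(0\), the fundamental theorem of calculus applies. An endpoint at \(a=0\) is harmless.

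\textbf{Step 2: combining with the drift bound.} Write \(p(t):=\Pbb^G_{\theta,t,u}(A_\eta)\) and \(B:=(1+\theta)|E|-c_{\theta,u}(1+\varepsilon)n\). Integrating \eqref{eq:deterministic-drift} over \([a,a+s]\) and using Step 1 gives
\[
-nC_\theta \le s\bigl(c_{\theta,u}(1+\varepsilon)n-|E|\bigr)+B\int_a^{a+s}p(t)\,\dd t .
\]
If \(B>0\), this rearranges to
\[
\frac1s\int_a^{a+s}p\,\dd t\ \ge\ \frac{|E|/n-c_{\theta,u}(1+\varepsilon)-C_\theta/s}{(1+\theta)|E|/n-c_{\theta,u}(1+\varepsilon)} .
\]
We have \(B/n\le (1+\theta)m_+-c_{\theta,u}(1+\varepsilon)\), and \(B/n>0\) whenever this denominator is positive. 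Replacing \(|E|/n\) by \(m_-\) in the numerator and by \(m_+\) in the denominator then yields the claim when the numerator with \(m_-\) is nonnegative. When that numerator is negative, the right-hand side is negative and the inequality is trivial because \(p\ge0\).

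\textbf{Main obstacle.} The main subtlety is the sign bookkeeping when replacing the exact ratio by the \(m_\pm\) bounds. Enlarging the denominator is valid only when the numerator is nonnegative, which the case split in Step 2 handles. One must also keep \(B>0\) before dividing. This requires \((1+\theta)|E|/n>c_{\theta,u}(1+\varepsilon)\), which does not follow from positivity of the stated denominator alone, since that denominator uses \(m_+\) rather than \(|E|/n\). If \(B\le0\), then \(c_{\theta,u}(1+\varepsilon)\ge(1+\theta)|E|/n\ge|E|/n\ge m_-\). The numerator is then negative and the bound is again trivial.
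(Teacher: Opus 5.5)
Your proposal is correct and follows essentially the same route as the paper: you combine the drift bound of Proposition~\ref{prop:deterministic-drift} with the a priori estimate \(\int_a^{a+s}D_{\theta,u}(t)\,\dd t\ge -nC_\theta\), which comes from \(1\le\lambda(\omega)\le n\). The only difference is the order of operations: you integrate before dividing and treat the degenerate cases (\(B\le 0\) and a negative numerator) explicitly, whereas the paper rearranges pointwise in \(t\), substitutes \(m_\pm\) and then averages, leaving those cases implicit (they are trivially covered by \(\Pbb^G_{\theta,t,u}(A_\eta)\ge 0\)).
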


\begin{proof}
By Proposition~\ref{prop:deterministic-drift},
\[
D_{\theta,u}(t)
\le
\bigl(c_{\theta,u}(1+\varepsilon)n-|E|\bigr)
+
\bigl((1+\theta)|E|-c_{\theta,u}(1+\varepsilon)n\bigr)\,
\Pbb^G_{\theta,t,u}(A_\eta).
\]
Rearranging, we obtain
\[
\Pbb^G_{\theta,t,u}(A_\eta)
\ge
\frac{
D_{\theta,u}(t)-c_{\theta,u}(1+\varepsilon)n+|E|
}{
(1+\theta)|E|-c_{\theta,u}(1+\varepsilon)n
}.
\]
Using \(|E|/n\le m_+\) in the denominator and \(|E|/n\ge m_-\) in the numerator gives
\[
\Pbb^G_{\theta,t,u}(A_\eta)
\ge
\frac{
D_{\theta,u}(t)-c_{\theta,u}(1+\varepsilon)n+m_- n
}{
(1+\theta)m_+ n-c_{\theta,u}(1+\varepsilon)n
}.
\]
Integrating over \([a,a+s]\), we get
\[
\frac1s\int_a^{a+s}\Pbb^G_{\theta,t,u}(A_\eta)\,\dd t
\ge
\frac{
\frac1s\int_a^{a+s}D_{\theta,u}(t)\,\dd t
-c_{\theta,u}(1+\varepsilon)n+m_- n
}{
(1+\theta)m_+ n-c_{\theta,u}(1+\varepsilon)n
}.
\]

It remains to bound the averaged drift term from below.
If \(\theta=1\), then
\[
D_{1,u}(t)=\frac{\dd}{\dd t}\Ebb^G_{1,t,u}[\lambda(\omega)].
\]
Since always \(1\le \lambda(\omega)\le n\), we have
\[
\frac1s\int_a^{a+s}D_{1,u}(t)\,\dd t
=
\frac{\Ebb[\lambda(\omega_{a+s})]-\Ebb[\lambda(\omega_a)]}{s}
\ge -\frac{n}{s}.
\]

If \(\theta\neq 1\), then
\[
D_{\theta,u}(t)=\frac{\theta}{\theta-1}\frac{\dd}{\dd t}\log Z_G(\theta,t,u).
\]
Moreover, since \(1\le \lambda(\omega)\le n\),
\[
\theta^n\le Z_G(\theta,t,u)\le \theta
\qquad\text{if }0<\theta<1,
\]
and
\[
\theta\le Z_G(\theta,t,u)\le \theta^n
\qquad\text{if }\theta>1.
\]
Hence in both cases,
\[
\left|
\log Z_G(\theta,a+s,u)-\log Z_G(\theta,a,u)
\right|
\le n|\log\theta|.
\]
Therefore
\[
\frac1s\int_a^{a+s}D_{\theta,u}(t)\,\dd t
=
\frac{\theta}{\theta-1}\,
\frac{\log Z_G(\theta,a+s,u)-\log Z_G(\theta,a,u)}{s}
\ge
-\frac{\theta |\log\theta|}{|\theta-1|}\,\frac{n}{s}.
\]

Thus, in all cases,
\[
\frac1s\int_a^{a+s}D_{\theta,u}(t)\,\dd t
\ge - C_\theta \frac{n}{s}.
\]
Substituting this into the previous bound yields
\[
\frac1s\int_a^{a+s}\Pbb^G_{\theta,t,u}(A_\eta)\,\dd t
\ge
\frac{
m_- - c_{\theta,u}(1+\varepsilon)-C_\theta/s
}{
(1+\theta)m_+ - c_{\theta,u}(1+\varepsilon)
},
\]
as claimed.
\end{proof}

\subsection{Proof of the averaged criterion}

\begin{proof}[Proof of Theorem~\ref{thm:abstract-averaged}]
Choose \(\varepsilon>0\) so small that
\[
\alpha>c_{\theta,u}(1+\varepsilon).
\]
Then choose \(\xi>0\) so small that $\alpha-\xi>c_{\theta,u}(1+\varepsilon)$.
By assumption \textnormal{(a)}, there exists \(\eta>0\) such that
\[
\Pbb\bigl(\mathcal E_n(\eta,\varepsilon)\bigr)\xrightarrow[n\to\infty]{}1.
\]
By assumption \textnormal{(b)},
\[
\Pbb\!\left(
\frac{|E(G_n)|}{n}\in[\alpha-\xi,\alpha+\xi]
\right)\xrightarrow[n\to\infty]{}1.
\]
Hence, with probability tending to \(1\), both events occur simultaneously. On this intersection,
Proposition~\ref{prop:abstract-averaged} applies with
\[
m_-=\alpha-\xi,
\qquad
m_+=\alpha+\xi.
\]
Therefore, for every \(a\ge 0\) and every \(s>0\),
\[
\frac1s\int_a^{a+s}\Pbb^{G_n}_{\theta,t,u}(A_\eta)\,\dd t
\ge
\frac{
\alpha-\xi-c_{\theta,u}(1+\varepsilon)-C_\theta/s
}{
(1+\theta)(\alpha+\xi)-c_{\theta,u}(1+\varepsilon)
}.
\]
Choose \(s>0\) so large that
\[
\alpha-\xi-c_{\theta,u}(1+\varepsilon)-C_\theta/s>0.
\]
Then the right-hand side is a strictly positive constant \(c>0\). Since the two required events
hold with probability tending to \(1\), we conclude that
\[
\Pbb\!\left(
\frac1s\int_a^{a+s}\Pbb^{G_n}_{\theta,t,u}(A_\eta)\,\dd t \ge c
\right)\xrightarrow[n\to\infty]{}1.
\]
This proves the theorem.
\end{proof}

\begin{proof}[Proof of Corollary~\ref{cor:abstract-good-time}]
If
\[
\frac1s\int_a^{a+s}\Pbb^{G_n}_{\theta,t,u}(A_\eta)\,\dd t\ge c,
\]
then there exists \(t\in[a,a+s]\) such that
\[
\Pbb^{G_n}_{\theta,t,u}(A_\eta)\ge c.
\]
Therefore the conclusion follows immediately from
Theorem~\ref{thm:abstract-averaged}.
\end{proof}

\subsection{Verification of the hypotheses for the examples}

\begin{lemma}[Edge density for sparse Erd\H{o}s--R\'enyi graphs]
\label{lem:ER-edge-density}
Let \(G_n\sim G(n,\lambda/n)\). Then
\[
\frac{|E(G_n)|}{n}\xrightarrow[n\to\infty]{\Pbb}\frac{\lambda}{2}.
\]
\end{lemma}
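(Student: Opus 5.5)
The plan is a second-moment (Chebyshev) argument. Write \(|E(G_n)|=\sum_{1\le i<j\le n}\xi_{ij}\), where the \(\xi_{ij}\) are independent Bernoulli\((\lambda/n)\) indicators. Then \(|E(G_n)|\sim\mathrm{Bin}\bigl(\binom n2,\lambda/n\bigr)\).

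First I compute the mean:
\[
\Ebb|E(G_n)|=\binom n2\frac{\lambda}{n}=\frac{\lambda(n-1)}{2},
\qquad\text{so}\qquad
\Ebb\frac{|E(G_n)|}{n}=\frac{\lambda}{2}\cdot\frac{n-1}{n}\to\frac\lambda2 .
\]
Next, by independence of the \(\xi_{ij}\),
\[
\operatorname{Var}|E(G_n)|=\binom n2\frac\lambda n\Bigl(1-\frac\lambda n\Bigr)\le\frac{\lambda n}{2},
\qquad\text{hence}\qquad
\operatorname{Var}\frac{|E(G_n)|}{n}\le\frac{\lambda}{2n}.
\]

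Now fix \(\xi>0\) and take \(n\) large enough that \(\bigl|\Ebb|E(G_n)|/n-\lambda/2\bigr|\le\xi/2\); this is possible since the bias is \(\lambda/(2n)\). Chebyshev's inequality then gives
\[
\Pbb\Bigl(\Bigl|\frac{|E(G_n)|}{n}-\frac\lambda2\Bigr|>\xi\Bigr)
\le\Pbb\Bigl(\Bigl|\frac{|E(G_n)|}{n}-\Ebb\frac{|E(G_n)|}{n}\Bigr|>\frac\xi2\Bigr)
\le\frac{4}{\xi^2}\cdot\frac{\lambda}{2n}\to0,
\]
which is the claimed convergence in probability.

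I do not expect a real obstacle. The only bookkeeping point is the \(O(1/n)\) discrepancy between the exact mean \(\lambda(n-1)/2\) and \(\lambda n/2\), which the triangle inequality above absorbs. If a stronger statement were wanted, a Chernoff bound for the binomial would give exponential concentration, but Chebyshev suffices for Assumption~\ref{ass:sparse-random-graph}\ref{ass:sparse-random-graph-b}.
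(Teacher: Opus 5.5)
Your proposal is correct and follows the paper's proof essentially verbatim: compute the binomial mean \(\lambda(n-1)/2\) and the \(O(n)\) variance, then apply Chebyshev's inequality. Your explicit triangle-inequality step absorbing the \(\lambda/(2n)\) bias spells out a detail the paper leaves implicit.
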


\begin{proof}
Since
\[
|E(G_n)|\sim \mathrm{Bin}\!\Bigl(\binom{n}{2},\frac{\lambda}{n}\Bigr),
\]
we have
\[
\Ebb[|E(G_n)|]=\binom{n}{2}\frac{\lambda}{n}
=\frac{\lambda}{2}(n-1),
\]
and
\[
\mathrm{Var}(|E(G_n)|)
=
\binom{n}{2}\frac{\lambda}{n}\Bigl(1-\frac{\lambda}{n}\Bigr)
=O(n).
\]
Hence
\[
\mathrm{Var}\!\left(\frac{|E(G_n)|}{n}\right)=O\!\left(\frac1n\right)\to 0,
\]
and the claim follows from Chebyshev's inequality.
\end{proof}

\begin{proof}[Proof of Corollary~\ref{cor:examples-averaged}]
We verify the assumptions of Theorem~\ref{thm:abstract-averaged} in each case.

\smallskip
\noindent\textit{Proof of part \textnormal{(i)}.}
Let \(G_n\) be the random \(d\)-regular graph on \([n]\). By
Corollary~\ref{cor:A1-RRG}, for every \(\varepsilon>0\) there exists \(\eta>0\) such that
\[
p_{d,n}\bigl(\mathcal E_n(\eta,\varepsilon)\bigr)\xrightarrow[n\to\infty]{}1.
\]
Moreover,
\[
|E(G_n)|=\frac{dn}{2}
\]
deterministically, so
\[
\frac{|E(G_n)|}{n}\to \frac d2.
\]
Since \(d>2c_{\theta,u}\), we have \(d/2>c_{\theta,u}\). Hence
Theorem~\ref{thm:abstract-averaged} applies with \(\alpha=d/2\).

\smallskip
\noindent\textit{Proof of part \textnormal{(ii)}.}
Let \(G_n\sim G(n,\lambda/n)\). By Lemma~\ref{lem:A1-ER}, for every \(\varepsilon>0\) there
exists \(\eta>0\) such that
\[
\Pbb\bigl(\mathcal E_n(\eta,\varepsilon)\bigr)\xrightarrow[n\to\infty]{}1.
\]
By Lemma~\ref{lem:ER-edge-density},
\[
\frac{|E(G_n)|}{n}\xrightarrow[n\to\infty]{\Pbb}\frac{\lambda}{2}.
\]
Since \(\lambda>2c_{\theta,u}\), we have \(\lambda/2>c_{\theta,u}\). Hence
Theorem~\ref{thm:abstract-averaged} applies with \(\alpha=\lambda/2\).

\smallskip
\noindent\textit{Proof of part \textnormal{(iii)}.}
Let \(G_n\) be the simple bounded-degree configuration model associated with the degree sequence
\((d_i^{(n)})_{i=1}^n\). By Corollary~\ref{cor:A1-CM}, for every \(\varepsilon>0\) there exists
\(\eta>0\) such that
\[
\Pbb\bigl(\mathcal E_n(\eta,\varepsilon)\bigr)\xrightarrow[n\to\infty]{}1.
\]
Since the degree sequence is deterministic and conditioning on simplicity does not change the
degree sequence,
\[
|E(G_n)|=\frac12\sum_{i=1}^n d_i^{(n)},
\]
and therefore
\[
\frac{|E(G_n)|}{n}
=
\frac12\cdot \frac1n\sum_{i=1}^n d_i^{(n)}
\longrightarrow \frac{\rho}{2}
\]
deterministically. Since \(\rho>2c_{\theta,u}\), we have \(\rho/2>c_{\theta,u}\). Hence
Theorem~\ref{thm:abstract-averaged} applies with \(\alpha=\rho/2\).
\end{proof}

\subsection{A general pointwise lower bound}
The following proposition converts the deterministic drift bound into a pointwise lower bound under small-set sparsity, edge-density control, and log-convexity. It will later be applied to the concrete random graph ensembles considered in this paper.
\begin{proposition}[Pointwise lower bound under sparsity, edge-density control, and log-convexity]
\label{prop:abstract-pointwise}
Fix \(u\in[0,1]\) and \(\theta\in\mathbb N\) with \(\theta>1\). Let \(G=(V,E)\) be a finite graph
with \(|V|=n\), and assume that for some \(\eta,\varepsilon>0\),
\[
e_G(S)\le (1+\varepsilon)|S|
\qquad\text{for every }S\subset V\text{ with }|S|\le \eta n.
\]
Assume moreover that
\[
\frac{|E|}{n}\in [m_-,m_+]
\]
for some \(m_-,m_+>0\), and that
\[
m_->c_{\theta,u}(1+\varepsilon).
\]
Then, for every
\[
t>
\frac{\theta\log\theta}{(\theta-1)\bigl(m_- - c_{\theta,u}(1+\varepsilon)\bigr)},
\]
one has
\[
\Pbb^G_{\theta,t,u}(A_\eta)
\ge
\frac{
m_- - c_{\theta,u}(1+\varepsilon)-\frac{\theta\log\theta}{(\theta-1)t}
}{
(1+\theta)m_+ - c_{\theta,u}(1+\varepsilon)
}.
\]
\end{proposition}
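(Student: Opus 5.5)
The plan is to combine the pointwise inequality from Proposition~\ref{prop:deterministic-drift} with a lower bound on the drift $D_{\theta,u}(t)$ at a single time $t$. That lower bound comes from log-convexity instead of averaging. Write $f(t):=\log Z_G(\theta,t,u)$. For $\theta\in\mathbb N$, $\theta>1$, the appendix (trace representation) gives that $f$ is convex in $t$. We take this as given, together with differentiability of $f$ from Lemma~\ref{lem:poisson-diff}. Then $D_{\theta,u}(t)=\frac{\theta}{\theta-1}f'(t)$, and $f'$ is nondecreasing.

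\textbf{Step 1: lower bound on $f'(t)$.} By convexity, for every $t>0$,
\[
f'(t)\ge \frac{f(t)-f(0)}{t}.
\]
At $t=0$ the configuration is empty. Each vertex then carries exactly one loop, so $\lambda=n$ and $f(0)=n\log\theta$. For general $t$ we have $\lambda\ge1$, hence $Z_G\ge\theta$ and $f(t)\ge\log\theta$. Therefore
\[
f'(t)\ge -\frac{(n-1)\log\theta}{t}\ge -\frac{n\log\theta}{t},
\qquad
D_{\theta,u}(t)\ge -\frac{\theta\log\theta}{\theta-1}\,\frac{n}{t}.
\]
This is the pointwise analogue of the averaged bound in Proposition~\ref{prop:abstract-averaged}, with $s$ replaced by $t$ and the window taken to be $[0,t]$.

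\textbf{Step 2: insert into the drift bound.} As in the proof of Proposition~\ref{prop:abstract-averaged}, rearrange \eqref{eq:deterministic-drift}:
\[
\Pbb^G_{\theta,t,u}(A_\eta)\ge \frac{D_{\theta,u}(t)-c_{\theta,u}(1+\varepsilon)n+|E|}{(1+\theta)|E|-c_{\theta,u}(1+\varepsilon)n}.
\]
The denominator is positive because $(1+\theta)|E|\ge(1+\theta)m_-n>c_{\theta,u}(1+\varepsilon)n$. Now use $|E|\ge m_-n$ in the numerator and $|E|\le m_+n$ in the denominator. The numerator is positive under the stated condition on $t$, so enlarging the denominator is legitimate. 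Together with Step 1 this gives exactly the claimed bound after dividing by $n$.

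\textbf{Main obstacle.} The only nontrivial input is the convexity of $t\mapsto\log Z_G(\theta,t,u)$. This requires the trace representation valid for integer $\theta$ (proved in the appendix), so I would simply invoke it. The remaining care lies in sign bookkeeping. Dividing by a positive denominator, and then replacing $|E|$ by $m_\pm n$, preserves the inequality only once positivity of the numerator is secured. That positivity is exactly what the threshold on $t$ guarantees.
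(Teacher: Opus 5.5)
Your proposal is correct and follows essentially the same route as the paper: rearrange the deterministic drift bound, bound $D_{\theta,u}(t)$ from below via convexity of $\log Z_G$ through $\partial_t\log Z_G(\theta,t,u)\ge (\log Z_G(\theta,t,u)-\log Z_G(\theta,0,u))/t\ge -n\log\theta/t$, and substitute. Your ordering is in fact slightly more careful than the paper's: you insert the drift lower bound and secure positivity of the numerator before enlarging the denominator to $(1+\theta)m_+n-c_{\theta,u}(1+\varepsilon)n$, whereas the paper performs the $m_\pm$ substitution before bounding $D_{\theta,u}(t)$.
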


\begin{proof}
By Proposition~\ref{prop:deterministic-drift},
\[
D_{\theta,u}(t)
\le
\bigl(c_{\theta,u}(1+\varepsilon)n-|E|\bigr)
+
\bigl((1+\theta)|E|-c_{\theta,u}(1+\varepsilon)n\bigr)\,
\Pbb^G_{\theta,t,u}(A_\eta).
\]
Rearranging,
\[
\Pbb^G_{\theta,t,u}(A_\eta)
\ge
\frac{
D_{\theta,u}(t)-c_{\theta,u}(1+\varepsilon)n+|E|
}{
(1+\theta)|E|-c_{\theta,u}(1+\varepsilon)n
}.
\]
Using \(|E|/n\ge m_-\) in the numerator and \(|E|/n\le m_+\) in the denominator yields
\[
\Pbb^G_{\theta,t,u}(A_\eta)
\ge
\frac{
D_{\theta,u}(t)-c_{\theta,u}(1+\varepsilon)n+m_-n
}{
(1+\theta)m_+n-c_{\theta,u}(1+\varepsilon)n
}.
\]

It remains to bound \(D_{\theta,u}(t)\) from below. By Proposition~\ref{prop:logconvex}, the function
\[
t\longmapsto \log Z_G(\theta,t,u)
\]
is convex. Therefore
\[
\partial_t\log Z_G(\theta,t,u)
\ge
\frac{\log Z_G(\theta,t,u)-\log Z_G(\theta,0,u)}{t}.
\]
Since \(1\le \lambda(\omega)\le n\), we have
\[
\theta\le Z_G(\theta,t,u)\le \theta^n,
\qquad
Z_G(\theta,0,u)=\theta^n,
\]
and hence
\[
\partial_t\log Z_G(\theta,t,u)\ge -\frac{n\log\theta}{t}.
\]
Therefore
\[
D_{\theta,u}(t)
=
\frac{\theta}{\theta-1}\partial_t\log Z_G(\theta,t,u)
\ge
-\frac{\theta\log\theta}{\theta-1}\frac{n}{t}.
\]
Substituting this lower bound into the previous estimate gives
\[
\Pbb^G_{\theta,t,u}(A_\eta)
\ge
\frac{
m_- - c_{\theta,u}(1+\varepsilon)-\frac{\theta\log\theta}{(\theta-1)t}
}{
(1+\theta)m_+ - c_{\theta,u}(1+\varepsilon)
}.
\]
This is positive precisely under the stated condition on \(t\).
\end{proof}

\subsection{Proof of the pointwise criterion}

\begin{proof}[Proof of Theorem~\ref{thm:abstract-pointwise}]
Choose \(\varepsilon>0\) so small that
\[
\alpha>c_{\theta,u}(1+\varepsilon).
\]
Then choose \(\xi>0\) so small that
\[
\alpha-\xi>c_{\theta,u}(1+\varepsilon)
\]
and
\[
\frac{\theta\log\theta}{(\theta-1)\bigl(\alpha-\xi-c_{\theta,u}(1+\varepsilon)\bigr)}
<
\frac{\theta\log\theta}{(\theta-1)(\alpha-c_{\theta,u})}+\frac{\delta}{2}.
\]
By assumption \textnormal{(a)}, there exists \(\eta>0\) such that
\[
\Pbb\bigl(\mathcal E_n(\eta,\varepsilon)\bigr)\xrightarrow[n\to\infty]{}1.
\]
By assumption \textnormal{(b)},
\[
\Pbb\!\left(
\frac{|E(G_n)|}{n}\in[\alpha-\xi,\alpha+\xi]
\right)\xrightarrow[n\to\infty]{}1.
\]
Hence, with probability tending to \(1\), both events occur simultaneously. On their intersection,
Proposition~\ref{prop:abstract-pointwise} applies with
\[
m_-=\alpha-\xi,
\qquad
m_+=\alpha+\xi.
\]
Therefore, for every
\[
t\ge \frac{\theta\log\theta}{(\theta-1)(\alpha-c_{\theta,u})}+\delta,
\]
we have
\[
\Pbb^{G_n}_{\theta,t,u}(A_\eta)
\ge
\frac{
\alpha-\xi-c_{\theta,u}(1+\varepsilon)-\frac{\theta\log\theta}{(\theta-1)t}
}{
(1+\theta)(\alpha+\xi)-c_{\theta,u}(1+\varepsilon)
}.
\]
By the choice of \(\xi\) and \(\varepsilon\), the numerator is strictly positive at
\[
t=\frac{\theta\log\theta}{(\theta-1)(\alpha-c_{\theta,u})}+\delta,
\]
and hence bounded below by a positive constant \(c>0\) uniformly for all larger \(t\). Since the
two required events hold with probability tending to \(1\), the conclusion follows.
\end{proof}

\begin{proof}[Proof of Corollary~\ref{cor:examples-pointwise}]
We verify the assumptions of Theorem~\ref{thm:abstract-pointwise} in each case.

\smallskip
\noindent\textit{Proof of part \textnormal{(i)}.}
Let \(G_n\) be the random \(d\)-regular graph on \([n]\), with law \(p_{d,n}\). As in the proof of
Corollary~\ref{cor:examples-averaged}, Corollary~\ref{cor:A1-RRG} gives assumption \textnormal{(a)},
while
\[
\frac{|E(G_n)|}{n}\to \frac d2
\]
holds deterministically. Hence Theorem~\ref{thm:abstract-pointwise} applies with \(\alpha=d/2\).
Since
\[
\frac{\theta\log\theta}{(\theta-1)(d/2-c_{\theta,u})}
=
T_{\theta,u}(d),
\]
this gives the stated conclusion.

\smallskip
\noindent\textit{Proof of part \textnormal{(ii)}.}
Let \(G_n\sim G(n,\lambda/n)\). By Lemma~\ref{lem:A1-ER}, assumption \textnormal{(a)} holds, and
by Lemma~\ref{lem:ER-edge-density},
\[
\frac{|E(G_n)|}{n}\xrightarrow[n\to\infty]{\Pbb}\frac{\lambda}{2}.
\]
Hence Theorem~\ref{thm:abstract-pointwise} applies with \(\alpha=\lambda/2\). Since
\[
\frac{\theta\log\theta}{(\theta-1)(\lambda/2-c_{\theta,u})}
=
T_{\theta,u}(\lambda),
\]
the desired statement follows.

\smallskip
\noindent\textit{Proof of part \textnormal{(iii)}.}
Let \(G_n\) be the simple bounded-degree configuration model associated with the degree sequence
\((d_i^{(n)})_{i=1}^n\). By Corollary~\ref{cor:A1-CM}, assumption \textnormal{(a)} holds, and
\[
\frac{|E(G_n)|}{n}\to \frac{\rho}{2}
\]
deterministically, as shown above. Hence Theorem~\ref{thm:abstract-pointwise} applies with
\(\alpha=\rho/2\). Since
\[
\frac{\theta\log\theta}{(\theta-1)(\rho/2-c_{\theta,u})}
=
T_{\theta,u}(\rho),
\]
this yields the stated conclusion.
\end{proof}

\appendix

\section{Log-convexity of the partition function for integer loop weights}
\label{app:log-convexity}
For completeness, we prove in this appendix the following log-convexity property of the partition function.

\begin{proposition}[Log-convexity of the partition function for integer loop weights]
\label{prop:logconvex}
Assume \(\theta\in\mathbb N\) and \(\theta>1\). Then, for every finite graph
\(G=(V,E)\) and every \(u\in[0,1]\), the map
\[
t\longmapsto Z_G(\theta,t,u)
\]
is log-convex on \([0,\infty)\), that is,
\[
\frac{\dd^2}{\dd t^2}\log Z_G(\theta,t,u)\ge 0
\qquad\text{for all } t\ge 0.
\]
\end{proposition}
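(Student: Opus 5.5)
The plan is to use the standard quantum-spin trace representation of \(Z_G(\theta,t,u)\) and then obtain convexity from a variance identity. Let \(\mathcal H:=\bigotimes_{x\in V}\mathbb C^\theta\) with product basis \(|a\rangle=\bigotimes_x|a_x\rangle\), \(a\in\{1,\dots,\theta\}^V\). For an edge \(e=\{x,y\}\), let \(T_e\) be the transposition operator acting on the factors at \(x\) and \(y\), \(\langle a_xa_y|T_e|b_xb_y\rangle=\delta_{a_x b_y}\delta_{a_y b_x}\). Let \(Q_e\) be the operator \(\langle a_xa_y|Q_e|b_xb_y\rangle=\delta_{a_xa_y}\delta_{b_xb_y}\), i.e.\ \(\theta\) times the projection onto \(\theta^{-1/2}\sum_a|aa\rangle\). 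Both are real symmetric matrices, so
\[
H:=\sum_{e\in E}\bigl(u\,T_e+(1-u)\,Q_e\bigr)
\]
is Hermitian. The claim I will establish is
\begin{equation*}
Z_G(\theta,t,u)=e^{-t|E|}\,\operatorname{Tr}_{\mathcal H}\, e^{tH},\qquad t\ge 0.
\end{equation*}

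\textbf{Step 1 (Poisson/Dyson expansion).} Expand \(e^{t(H-|E|)}\) as a time-ordered integral over marked Poisson configurations on \(E\times[0,1)\times\{\times,\mid\}\) with intensity \(t\,\dd s\otimes(u\delta_\times+(1-u)\delta_\mid)\). Concretely, write \(e^{t(H-|E|)}=\lim_N (1+\tfrac tN(H-|E|))^N\), or equivalently use the Dyson series with the probability weight \(e^{-t|E|}t^k/k!\). This gives
\[
e^{t(H-|E|)}=\int \prod^{\leftarrow}_{(e,s,m)\in\omega} R_{e,m}\,\rho^G_{t,u}(\dd\omega),
\]
where \(R_{e,\times}=T_e\), \(R_{e,\mid}=Q_e\), and the product is ordered by increasing height \(s\). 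Since \(\omega\) has finitely many atoms almost surely and the operators are bounded, this is routine.

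\textbf{Step 2 (trace equals \(\theta^{\lambda(\omega)}\)).} For fixed \(\omega\), expand the trace of the ordered product in the product basis, inserting a basis configuration on each time interval between consecutive marks. The trace imposes periodicity in \(s\in S^1\). A cross at \((e,s)\) forces the label at \(x\) just above \(s\) to equal the label at \(y\) just below, and vice versa. This is exactly the propagation of a label along a trajectory that jumps and keeps its direction. A bar forces the two labels just below to agree and the two labels just above to agree, with no constraint between above and below. This is exactly the propagation along a trajectory that jumps and reverses direction. Hence a labelling contributes \(1\) if it is constant along each loop of \(\cL(\omega)\), and \(0\) otherwise. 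The number of admissible labellings is \(\theta^{\lambda(\omega)}\), so \(\operatorname{Tr}\prod R=\theta^{\lambda(\omega)}\). Integrating against \(\rho^G_{t,u}\) gives the representation.

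\textbf{Step 3 (convexity).} Write \(f(t):=\log\operatorname{Tr} e^{tH}\), so that \(\log Z_G=f(t)-t|E|\) and the linear term is irrelevant. Define the Gibbs state \(\langle A\rangle_t:=\operatorname{Tr}(Ae^{tH})/\operatorname{Tr}e^{tH}\). Since \(H\) commutes with \(e^{tH}\),
\[
f'(t)=\langle H\rangle_t,\qquad f''(t)=\langle H^2\rangle_t-\langle H\rangle_t^2 .
\]
Diagonalizing \(H=\sum_j h_j P_j\) with real eigenvalues \(h_j\), the state \(\langle\cdot\rangle_t\) restricted to functions of \(H\) is the probability measure proportional to \(e^{th_j}\dim P_j\). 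So \(f''(t)\) is the variance of a real random variable and is nonnegative. Note also that \(\operatorname{Tr}e^{tH}>0\) for all real \(t\), so the formula extends smoothly to \(t=0\) and the claim holds on all of \([0,\infty)\).

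The main obstacle is Step 2: checking carefully that the matrix-element constraints of \(Q_e\) reproduce the direction-reversing rule of bars. In particular one must confirm that a loop traversed partly upward and partly downward still forces a single common label and contributes exactly one factor \(\theta\). I would verify this by following the loop as in Lemma~\ref{lem:local-surgery}: at each bar, the pair of equality constraints links the incoming strand to the outgoing reversed strand on the same side of the mark.
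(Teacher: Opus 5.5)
Your proposal is correct and follows the paper's route: a trace representation $Z_G(\theta,t,u)=\operatorname{Tr}e^{-tH_{G,\theta,u}}$ (in your case $H_{G,\theta,u}=|E|-\sum_{e}(uT_e+(1-u)Q_e)$), followed by identifying $\frac{\dd^2}{\dd t^2}\log Z_G$ as a variance over the spectrum. The only difference is that you derive the representation yourself, via the Poisson/Dyson expansion and the constant-label-per-loop count with the sign-free $Q_e$, whereas the paper cites Ueltschi (2013) for it.
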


We first record the standard trace representation.

\begin{proposition}[Standard trace representation for integer loop weights]
\label{prop:trace-representation}
Let \(\theta\in\mathbb N\), \(u\in[0,1]\), and let \(G=(V,E)\) be a finite graph. Then there exists a finite-dimensional self-adjoint operator \(H_{G,\theta,u}\) such that
\[
Z_G(\theta,t,u)=\operatorname{Tr}\bigl(e^{-tH_{G,\theta,u}}\bigr)
\qquad\text{for all }t\ge 0.
\]
\end{proposition}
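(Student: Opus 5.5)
I will prove Proposition~\ref{prop:trace-representation} by the standard Tóth/Aizenman--Nachtergaele construction on the Hilbert space $\mathcal H=\bigotimes_{x\in V}\mathbb C^\theta$. Let $\{|a\rangle\}_{a=1}^\theta$ be the canonical basis of $\mathbb C^\theta$. For an edge $e=\{x,y\}$, let $T_e$ be the transposition operator, $T_e|a\rangle_x|b\rangle_y=|b\rangle_x|a\rangle_y$. Let $Q_e$ be $\theta$ times the projection onto the vector $\theta^{-1/2}\sum_a|a\rangle_x|a\rangle_y$, so that
\[
\langle a'b'|Q_e|ab\rangle=\delta_{ab}\,\delta_{a'b'} .
\]
Both operators act as the identity on the other factors and are real symmetric matrices, hence self-adjoint. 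Set
\[
H_{G,\theta,u}:=\sum_{e\in E}\bigl(1-uT_e-(1-u)Q_e\bigr),
\]
which is finite-dimensional and self-adjoint.

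The key step is to expand $e^{-tH}$. I write $e^{-tH}=e^{-t|E|}\exp\bigl(t\sum_e(uT_e+(1-u)Q_e)\bigr)$ and apply the Dyson/Poisson expansion. Since the exponent is a sum of bounded operators, expanding the exponential series gives
\[
\operatorname{Tr} e^{-tH}=\int\rho^G_{t,u}(\dd\omega)\,\operatorname{Tr}\Bigl(\prod_{(e,s,m)\in\omega}^{\text{time-ordered}}O_{e,m}\Bigr),
\]
where $O_{e,\times}=T_e$ and $O_{e,\mid}=Q_e$. This identity follows by matching the $k$-th order terms: the Poisson process has $k$ points with probability $e^{-t|E|}(t|E|)^k/k!$, the times are uniform and get time-ordered, the edges are uniform, and the marks are i.i.d. with law $u\delta_\times+(1-u)\delta_\mid$. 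The sum of $k!$ orderings over the simplex reproduces $1/k!$.

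It then remains to show that for a fixed $\omega$ the trace equals $\theta^{\lambda(\omega)}$. I will evaluate the trace in the product basis as a sum over colourings $V\times S^1\to[\theta]$ that are constant between marks and satisfy the matrix-element constraints at each mark:
\begin{itemize}
\item a transposition requires the outgoing colours to be the swapped incoming colours, so the colour follows the particle across a cross with direction preserved;
\item $Q_e$ requires the colours just below the mark at $x$ and $y$ to agree, and likewise just above, which is exactly a bar (a reversal joining the two lower strands together and the two upper strands together).
\end{itemize}
Each admissible colouring contributes weight $1$. By periodicity (the trace), the admissible colourings are exactly those constant along each loop of $\cL(\omega)$, which gives $\theta^{\lambda(\omega)}$ terms. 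Verifying that the local connection rules of $T_e$ and $Q_e$ match the particle dynamics of crosses and bars, including direction reversal, is the only delicate bookkeeping and the main point to check carefully. Since $\theta\in\mathbb N$ is used only to define $\mathbb C^\theta$, this completes the proof.

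Log-convexity (Proposition~\ref{prop:logconvex}) then follows immediately. Writing $Z=\sum_j e^{-t\mu_j}$ over the real eigenvalues $\mu_j$ of $H$, the quantity $\frac{\dd^2}{\dd t^2}\log Z$ is the variance of $\mu$ under the weights $e^{-t\mu_j}/Z$, which is nonnegative.
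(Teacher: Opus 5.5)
Your proposal is correct and follows essentially the same approach as the paper. The paper's proof just cites Ueltschi's loop representation (Section 3 of that reference), which uses the same ingredients: the space $\bigotimes_{x\in V}\mathbb C^{\theta}$ with $\theta=2S+1$, the transposition $T_e$, the sign-free operator $Q_e$ with $\langle a'b'|Q_e|ab\rangle=\delta_{ab}\delta_{a'b'}$, the Hamiltonian $\sum_e(1-uT_e-(1-u)Q_e)$, a Poisson/Dyson expansion of $e^{-tH}$, and a count of colourings constant along loops. You carry out the expansion and the matching of crosses and bars with $T_e$ and $Q_e$ explicitly, and that bookkeeping is right on arbitrary graphs, since your $Q_e$ introduces no signs.
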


\begin{proof}
This is the standard random loop representation for quantum spin systems with spin $S=\frac{\theta-1}{2}$,
for which the loop weight is \(2S+1=\theta\); see \cite[Section~3]{Ueltschi2013}. In that representation, the partition function can be written in the form
\[
Z_G(\theta,t,u)=\operatorname{Tr}\bigl(e^{-tH_{G,\theta,u}}\bigr)
\]
for a finite-dimensional self-adjoint Hamiltonian \(H_{G,\theta,u}\).
\end{proof}

\begin{proof}[Proof of Proposition~\ref{prop:logconvex}]
By Proposition~\ref{prop:trace-representation}, there exists a finite-dimensional self-adjoint operator \(H_{G,\theta,u}\) such that
\[
Z_G(\theta,t,u)=\operatorname{Tr}\bigl(e^{-tH_{G,\theta,u}}\bigr).
\]
Let
\[
\lambda_1,\dots,\lambda_M
\]
denote the eigenvalues of \(H_{G,\theta,u}\), counted with multiplicity. Then
\[
Z_G(\theta,t,u)=\sum_{j=1}^M e^{-t\lambda_j}.
\]
Differentiating, we get
\[
Z_G'(\theta,t,u)=-\sum_{j=1}^M \lambda_j e^{-t\lambda_j},
\qquad
Z_G''(\theta,t,u)=\sum_{j=1}^M \lambda_j^2 e^{-t\lambda_j}.
\]
Therefore
\[
\frac{\dd^2}{\dd t^2}\log Z_G(\theta,t,u)
=
\frac{Z_G''(\theta,t,u)}{Z_G(\theta,t,u)}
-
\left(\frac{Z_G'(\theta,t,u)}{Z_G(\theta,t,u)}\right)^2.
\]
Substituting the above expressions yields
\[
\frac{\dd^2}{\dd t^2}\log Z_G(\theta,t,u)
=
\frac{\sum_{j=1}^M \lambda_j^2 e^{-t\lambda_j}}{\sum_{j=1}^M e^{-t\lambda_j}}
-
\left(
\frac{\sum_{j=1}^M \lambda_j e^{-t\lambda_j}}{\sum_{j=1}^M e^{-t\lambda_j}}
\right)^2.
\]
The right-hand side is the variance of the numbers \(\lambda_1,\dots,\lambda_M\) with respect to the probability weights
\[
p_j(t):=\frac{e^{-t\lambda_j}}{\sum_{k=1}^M e^{-t\lambda_k}},
\]
and is therefore nonnegative. Hence
\[
\frac{\dd^2}{\dd t^2}\log Z_G(\theta,t,u)\ge 0
\qquad\text{for all }t\ge 0.
\]
This proves the claim.
\end{proof}

\begin{remark}
The trace representation used above is standard in the random loop representation of quantum spin systems; in the notation of \cite{Ueltschi2013}, the loop weight is \(2S+1\), which yields all integer values of \(\theta\).
\end{remark}

\noindent\textbf{\large Acknowledgement}. The author is grateful to Daniel Ueltschi for drawing his attention to the paper of Poudevigne--Auboiron during a stay in Warwick in February 2025.

\medskip

\noindent\textbf{Funding acknowledgement.} The author gratefully acknowledges financial support from Bischöfliche Studienförderung Cusanuswerk.

\bibliographystyle{plain}
\bibliography{random_loops_refs}

\end{document}